\documentclass[11pt,a4paper]{article}
\usepackage{a4,amssymb,amsmath,amsthm,url,color,enumerate}
\usepackage{bezier,amsfonts,amssymb,graphicx,amsthm,url}
\usepackage[english]{babel}
\title{Independence and Matchings  in $\sigma$-hypergraphs}
\date{}
\begin{document}
\newtheorem{theorem}{Theorem}[section]
\newtheorem{definition}{Definition}[section]
\newtheorem{proposition}[theorem]{Proposition}
\newtheorem{corollary}[theorem]{Corollary}
\newtheorem{lemma}[theorem]{Lemma}

\author{Yair Caro \\ Department of Mathematics\\ University of Haifa-Oranim \\ Israel \and Josef  Lauri\\ Department of Mathematics \\ University of Malta
\\ Malta \and Christina Zarb \\Department of Mathematics \\University of Malta \\Malta }
\DeclareGraphicsExtensions{.pdf,.png,.jpg}

\maketitle
\begin{abstract}
Let $\sigma$ be a partition of the positive integer $r$. A $\sigma$-hypergraph $H=H(n,r,q|\sigma)$ is an $r$-uniform hypergraph on $nq$ vertices which are partitioned into $n$ classes $V_1, V_2, \ldots, V_n$ each containing $q$ vertices. An $r$-subset $K$ of vertices is an edge of the hypergraph if the partition of $r$ formed by the non-zero cardinalities $|K\cap V_i|, 1\leq i \leq n,$ is $\sigma$.

In earlier works we have considered colourings of the vertices of $H$ which are constrained such that any edge has at least $\alpha$ and at most $\beta$ vertices of the same colour, and we have shown that interesting results can be obtained by varying $\alpha, \beta$ and the parameters of $H$ appropriately. In this paper we continue to investigate the versatility of $\sigma$-hypergraphs by considering two classical problems: independence and matchings.

We first demonstrate an interesting link between the constrained colourings described above and the $k$-independence number of a hypergraph, that is, the largest cardinality of a subset of vertices of a hypergraph not containing $k+1$ vertices in the same edge. We also give an exact computation of the $k$-independence number of the $\sigma$-hypergraph $H$.  We then present results on maximum, and sometimes perfect, matchings in $H$. These results often depend on divisibility relations between the parameters of $H$ and on the highest common factor of the parts of $\sigma$. 
\end{abstract}

\section{Introduction}

Let $V=\{v_1,v_2,...,v_n\}$ be a finite set, and let $E=\{E_1,E_2,...,E_m\}$ be a family of subsets of $X$.  The pair $H=(X,E)$ is called a \emph{hypergraph} with vertex-set $V(H)=V$, and with edge-set $E(H)=E$.  When all the subsets are of the same size $r$, we say that $H$ is an \emph{r-uniform hypergraph}.   A $\sigma$-hypergraph $H= H(n,r,q$ $\mid$ $\sigma$), where $\sigma$ is a partition of $r$,  is an $r$-uniform hypergraph having $nq$ vertices partitioned into $ n$ \emph{classes} of $q$ vertices each.  If the classes are denoted by $V_1$, $V_2$,...,$V_n$, then a subset $K$ of $V(H)$ of size $r$ is an edge if the partition of $r$ formed by the non-zero cardinalities $ \mid$ $K$ $\cap$ $V_i$ $\mid$, $ 1 \leq i \leq n$, is $\sigma$. The non-empty intersections $K$ $\cap$ $V_i$ are called the parts of $K$, and $s(\sigma)$ denotes the number of parts.   The first two authors introduced $\sigma$-hypergraphs in \cite{CaroLauri14}, studying a particular instance of Voloshin colourings of these r-uniform hypergraphs (see \cite{voloshin02} for a detailed study of these colourings).  The chromatic spectra and other properties were further studied in \cite{CLZ2,CLZ1}.  In this paper, we turn our attention to some classic properties of graphs and hypergraphs, and investigate these properties for $\sigma$-hypergraphs.

A set of vertices of a hypergraph is said to be \emph{independent} if it contains no edges.   The \emph{independence number} $\alpha(H)$ of a hypergraph $H$ is the size of a largest independent set of vertices of $H$.   The problem of finding the maximum independent set in a graph, and also in hypergraphs, is a well-known $NP$-hard problem in Graph Theory (as described in \cite{garey1979computers}).  

We also consider the following variation of independence: a set of vertices $S$  in an $r$-uniform hypergraph $H$ is said to be \emph{$k$-independent},  for $1 \leq k  \leq r-1$, if for every edge $E \in E(H)$, $|E \cap  S | \leq k$.  The largest cardinality of a $k$-independent set is denoted $\alpha_{k}(H)$.  We observe that $(r-1)$-independence is  the classical notion of independence defined above and hence $\alpha(H)=\alpha_{r-1}(H)$, while $1$-independence is sometimes called  \emph{strong independence}.  To prevent confusion, we stress here that our notion of $k$-independence in $r$-uniform hypergraphs has no relation with the notion of the $k$-independent number in graphs (see \cite{CaroHan13,ChellaliFHV12,HansbergPep13}).

We then consider matchings in $\sigma$-hypergraphs. Given an $r$-uniform hypergraph $H$, a \emph{matching} is a set of pairwise vertex-disjoint edges $M \subset E(H)$.  A \emph{perfect matching} is a matching which covers all vertices of $H$.  Graphs which contain a perfect matching are characterised by a theorem of Tutte (as cited in \cite{plummermatching}), but deciding whether an $r$-uniform hypergraph contains a perfect matching is an NP-complete problem for $r \geq 3$, as discussed in \cite{huang2012size}.  As in \cite{huang2012size}, we denote the size of the largest matching in an $r$-uniform hypergraph $H$ by $\nu(H)$.  If $H$ has a perfect matching, then $\nu(H)=\frac{|V(H)|}{r}$, so clearly $\nu(H)\leq\frac{|V(H)|}{r}$.  

This paper is organised as follows.  We first consider independence and $k$-independence in $\sigma$-hypergraphs.  We start by looking at an interesting link between $k$-independence and the upper  and lower chromatic numbers $\overline{\chi}_{\alpha,\beta}$  and $\chi_{\alpha,\beta}$ respectively, for a constrained colouring of a $\sigma$-hypergraph, which is studied extensively in \cite{bujtas2007colour,bujtastuz09,bujtas2009color,bujtas2010color,bujtastuz13,bujtasv2011color,CLZ1}.  A constrained colouring, or $t$-$(\alpha,\beta)$-colouring  is a type of hypergraph colouring of the vertices using $t$ colours such that each edge has at least $\alpha$ and at most $\beta$ colours appearing on its vertices.  This type of colouring was first defined in \cite{bujtastuz09}.   The \emph{lower chromatic number} $\chi_{\alpha,\beta}$ is defined as the least number $k$ for which $H$ has a $k$-$(\alpha,\beta)$-colouring.  Similarly, the \emph{upper chromatic number} $\overline{\chi}_{\alpha,\beta}$ is the largest $k$ for which $H$ has a $k$-$(\alpha,\beta)$-colouring. This serves as further motivation to study these parameters further.  We then present an exact computation for the $k$-independence number in $\sigma$-hypergraphs.  We then move on to matchings in $\sigma$-hypergraphs and present tight bounds for $\nu$, as well as conditions for the existence of and constructions of perfect matchings.  We conclude with some further considerations and open questions for maximum matchings.

\section{Independent sets in $\sigma$-hypergraphs}

In this section we develop some lemmas and tools that lead to Theorem \ref{alpha_k}, in which we present a complete, effective and easily computable formula for the $k$-independence number $\alpha_k$ for $\sigma$-hypergraphs.  Some examples are given after this theorem is proved.
 
An important, though simple, link between the $k$-independence number of hypergraphs and the upper and lower  $(\alpha,\beta)$-chromatic number, $\overline{\chi}_{\alpha,\beta}$ and $\chi_{\alpha,\beta}$ respectively, serves as our starting result and motivation, as it connects the current work to  previous work in \cite{bujtastuz09,CaroLauri14,CLZ2,CLZ1},  which concentrated on $(\alpha,\beta)$-colourings of $\sigma$-hypergraphs.  This is similar in concept to the $C$-stability number as an upper bound for  $\overline{\chi}$  in mixed hypergraphs \cite{tuza2008problems,voloshin02}.

 We first prove a simple lemma to be used in this Proposition.

\begin{lemma} \label{sums}
Let $1 \leq x_1 \leq x_2 \leq \ldots \leq x_k$ be positive integers with \[\sum_{j=1}^{k}x_j \leq q.\]  Then for every $t$, $1 \leq t \leq k$, \[\sum_{j=1}^t x_j \leq \frac{tq}{k} .\]
\end{lemma}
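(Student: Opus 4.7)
The plan is to prove the natural strengthening that the average of the first $t$ terms is at most the average of all $k$ terms, and then combine with the hypothesis $\sum_{j=1}^k x_j \le q$. Concretely, set $A_t = \tfrac{1}{t}\sum_{j=1}^t x_j$; the goal $\sum_{j=1}^t x_j \le tq/k$ is equivalent to $A_t \le q/k$, so it suffices to show $A_t \le A_k$ and then use $A_k \le q/k$.

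To see $A_t \le A_k$, I would exploit the monotonicity $x_1 \le x_2 \le \ldots \le x_k$ as follows. Since $A_t$ is an average of numbers each at most $x_t$, we have $A_t \le x_t$. But for every $j > t$, monotonicity gives $x_j \ge x_t \ge A_t$. Therefore
\[
\sum_{j=1}^k x_j \;=\; \sum_{j=1}^t x_j + \sum_{j=t+1}^k x_j \;\ge\; tA_t + (k-t)A_t \;=\; kA_t.
\]
Dividing by $k$ yields $A_t \le A_k$, and combining with the hypothesis $A_k \le q/k$ gives $\sum_{j=1}^t x_j = tA_t \le tq/k$, as required.

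There is essentially no obstacle here; the lemma is a clean instance of the general fact that sorting a sequence in non-decreasing order makes the running averages non-decreasing. The only thing to be a little careful about is the boundary case $t = k$, where the inequality reduces to the hypothesis itself, and the case $t = 1$, where it reads $x_1 \le q/k$, which is immediate since $x_1$ is the minimum of $k$ numbers summing to at most $q$. Both fit the argument above without modification, so a direct two-line write-up should suffice.
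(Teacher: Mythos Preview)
Your proof is correct and follows essentially the same route as the paper: both arguments hinge on the two consequences of monotonicity, namely $\sum_{j=1}^t x_j \le t x_t$ and $\sum_{j=t+1}^k x_j \ge (k-t)x_t$, and then combine these with the bound $\sum_{j=1}^k x_j \le q$ to obtain $k\sum_{j=1}^t x_j \le tq$. The only difference is cosmetic: you package the computation via the running averages $A_t \le A_k \le q/k$, whereas the paper writes out the same chain of inequalities directly.
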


\begin{proof}
Clearly 
\[\sum_{j=1}^t x_j = \sum_{j=1}^k x_j  - \sum_{j=t+1}^k x_j  \leq q- \sum_{j=t+1}^k x_j \]\[\leq  q - (k-t)x_{t+1} \leq q - (k-t)x_{t} \leq  q - \left(\frac{k-t}{t} \right )\sum_{j=1}^t x_j\]

Hence \[t\sum_{j=1}^t x_j  + (k-t)\sum_{j=1}^t x_j = k\sum_{j=1}^t x_j  \leq tq,\] and therefore \[ \sum_{j=1}^t x_j  \leq \frac{tq}{k}.\]

\end{proof}

\begin{proposition} \label{linkcolind}
Let $H$ be an $r$-uniform hypergraph.  Then  \[\alpha_{\beta}(H) \geq \overline{\chi}_{\alpha,\beta}(H) \geq \chi_{\alpha,\beta} \geq \frac{ (\alpha-1)|V(H)|}{\alpha(H)}.\]  
\end{proposition}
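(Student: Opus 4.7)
The plan is to treat the chain as three separate inequalities. The middle one, $\overline{\chi}_{\alpha,\beta}(H) \geq \chi_{\alpha,\beta}(H)$, is immediate from the definitions: both are extrema (max and min respectively) taken over the same set of integers $k$ for which a $k$-$(\alpha,\beta)$-colouring exists, so the max is at least the min whenever this set is nonempty.

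For the left-hand inequality $\alpha_{\beta}(H) \geq \overline{\chi}_{\alpha,\beta}(H)$, I would start from an $(\alpha,\beta)$-colouring attaining $t = \overline{\chi}_{\alpha,\beta}(H)$ and form a transversal $S$ by picking exactly one vertex from each of the $t$ colour classes, so $|S| = t$. For any edge $E$, the vertices in $S \cap E$ use pairwise distinct colours by construction, and each such colour is actually present on $E$; hence $|S \cap E|$ is bounded by the number of distinct colours appearing on $E$, which is at most $\beta$. Thus $S$ is $\beta$-independent, giving $\alpha_{\beta}(H) \geq t$.

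The right-hand inequality $\chi_{\alpha,\beta}(H) \geq (\alpha-1)|V(H)|/\alpha(H)$ is where Lemma \ref{sums} enters. I would fix a minimal $(\alpha,\beta)$-colouring with $k = \chi_{\alpha,\beta}(H)$ colour classes $C_1, \ldots, C_k$, ordered so that $|C_1| \leq |C_2| \leq \cdots \leq |C_k|$, and observe that the union of \emph{any} $\alpha - 1$ of these classes is classically independent: if such a union contained an edge $E$, then $E$ would use at most $\alpha - 1$ colours, contradicting the lower constraint. In particular, the union $W$ of the $\alpha-1$ largest classes is independent, so $\alpha(H) \geq |W|$. To bound $|W|$ from below, I apply Lemma \ref{sums} with $x_j = |C_j|$, $q = |V(H)|$ and $t = k - \alpha + 1$, obtaining $\sum_{j=1}^{k-\alpha+1} |C_j| \leq (k-\alpha+1)|V(H)|/k$ and hence $|W| = |V(H)| - \sum_{j=1}^{k-\alpha+1} |C_j| \geq (\alpha-1)|V(H)|/k$. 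Combining, $\alpha(H) \geq (\alpha-1)|V(H)|/k$, which rearranges to the claimed bound.

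There is no serious technical obstacle; the one conceptual move that must be spotted is the passage from the lower constraint ``each edge uses at least $\alpha$ colours'' to the classical independence of any union of $\alpha - 1$ colour classes, after which the lemma reduces the remaining work to an averaging computation.
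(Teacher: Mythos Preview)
Your proof is correct. The first two inequalities match the paper's argument essentially verbatim (the paper phrases the first by contradiction, you phrase it directly by exhibiting a $\beta$-independent transversal, but the content is identical).

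For the rightmost inequality your route differs slightly from the paper's, though both rest on the same key observation that any union of $\alpha-1$ colour classes is classically independent. The paper writes $p = m(\alpha-1) + z$, groups the colour classes into $m$ blocks of $\alpha-1$ each (each block summing to at most $\alpha(H)$), and then invokes Lemma~\ref{sums} with parameters $(q,k,t) = (\alpha(H),\,\alpha-1,\,z)$ only to bound the $z$ leftover smallest classes. You instead apply Lemma~\ref{sums} once to the full sequence of class sizes with $(q,k,t) = (|V(H)|,\,k,\,k-\alpha+1)$, bounding the complement of the $\alpha-1$ largest classes directly. Your version is a touch more streamlined, avoiding the Euclidean-division bookkeeping; the paper's version makes the blockwise intuition more explicit. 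Either way the arithmetic collapses to $\alpha(H) \geq (\alpha-1)|V(H)|/k$.
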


\begin{proof}
Consider a colouring of $H$ using $\alpha_{\beta}(H)+1$ colours.  Consider a set $D$ of vertices of $H$ such that each of the $\alpha_{\beta}(H)+1$ colours used is represented in $D$.  Then $|D| = \alpha_{\beta}(H)+1$ , and by the definition of the $\beta$-independence number, there exists an edge $E$ such that $|E \cap D| \geq \beta+1$, hence $E$ contains at least $\beta+1$ distinct colours and therefore this is not a valid $(\alpha,\beta)$-colouring.  Thus the number of colours which can be used in an $(\alpha,\beta)$-colouring of $H$ is at most $\alpha_{\beta}(H)$, that is $\alpha_{\beta}(H) \geq \overline{\chi}_{\alpha,\beta}(H)  \geq \chi_{\alpha,\beta}$.

For the last part, let $A_1,A_2,\ldots,A_p$, where $p= \chi_{\alpha,\beta}(H)$, be a partition of $V(H)$ into monochromatic colour classes with $|A_1| \leq |A_2| \leq \ldots \leq |A_p|$.  Clearly, the union of any $\alpha-1$ colour classes form an independent set, otherwise there would be an edge with at most $\alpha-1$ colours, a contradiction.  Hence such a union has cardinality at most $\alpha(H)$. 

Now let $\chi_{\alpha,\beta}(H)=p=m(\alpha-1)+z$, where $0 \leq z \leq \alpha-2$.  Then we have $m$ sets of $\alpha-1$ colour classes, and each such set has cardinality at most $\alpha(H)$, and hence the sum of the cardinality of these classes is at most $m\alpha(H)$.  The remaining $z$ colour classes have total cardinality at most $\frac{z \alpha(H)}{\alpha-1}$, by Lemma \ref{sums}, using $q=\alpha(H)$, $k=\alpha-1$ and $z=t$, and noticing that these $z$ classes are the smallest $z$ classes.

Thus 

\[|V(H)|=\sum_{j=1}^{p} |A_j| \leq  m\alpha(H)+\frac{z \alpha(H)}{\alpha-1} =\frac{\alpha(H)}{\alpha-1}(m(\alpha-1)+z)=\frac{\alpha(H)p}{\alpha-1},\]

and hence \[p=\chi_{\alpha,\beta}(H) \geq \frac{(\alpha-1)|V(H)|}{\alpha(H)}.\]

\end{proof}

 In particular, the above Proposition gives a necessary condition for the existence of an $(\alpha,\beta)$-colouring of an $r$-uniform hypergraph.

\begin{corollary}
Let $H$ be an $r$-uniform hypergraph.  If $|V(H)| > \frac{\alpha(H)\alpha_{\beta}(H)}{\alpha-1}$, then no $(\alpha,\beta)$-colouring of $H$ exists.
\end{corollary}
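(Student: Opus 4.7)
The plan is to observe that the corollary is essentially the contrapositive of the chain of inequalities in Proposition \ref{linkcolind}, rearranged. So the proof should be quite short and require no new ideas beyond what is already established.

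First I would note that the corollary implicitly hinges on the existence of an $(\alpha,\beta)$-colouring: if such a colouring exists, then $\chi_{\alpha,\beta}(H)$ and $\overline{\chi}_{\alpha,\beta}(H)$ are defined as genuine integers, and the entire inequality chain of Proposition \ref{linkcolind} is meaningful. So I would begin the proof by assuming, for contradiction, that an $(\alpha,\beta)$-colouring of $H$ does exist.

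Next I would invoke the proposition directly: under this assumption, we have
\[
\alpha_\beta(H) \;\geq\; \chi_{\alpha,\beta}(H) \;\geq\; \frac{(\alpha-1)\,|V(H)|}{\alpha(H)}.
\]
Multiplying the outer inequality by $\alpha(H)/(\alpha-1)$ yields
\[
|V(H)| \;\leq\; \frac{\alpha(H)\,\alpha_\beta(H)}{\alpha-1},
\]
which directly contradicts the hypothesis $|V(H)| > \alpha(H)\,\alpha_\beta(H)/(\alpha-1)$. Hence no such colouring can exist.

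There is no real obstacle here — the only mild subtlety is being careful that $\alpha > 1$ so that division by $\alpha-1$ is legitimate (this is already implicit in the statement of Proposition \ref{linkcolind}, since the last fraction would otherwise be undefined, and constrained $(\alpha,\beta)$-colourings are only interesting for $\alpha \geq 2$). So the argument is simply: assume a colouring exists, apply the proposition, derive the reverse inequality on $|V(H)|$, and contradict the hypothesis.
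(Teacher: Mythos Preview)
Your proof is correct and follows essentially the same route as the paper: both simply observe that if an $(\alpha,\beta)$-colouring exists, then by Proposition~\ref{linkcolind} one must have $\frac{(\alpha-1)|V(H)|}{\alpha(H)} \leq \alpha_\beta(H)$, and rearranging gives the contrapositive. The paper phrases this directly rather than as a contradiction, but the content is identical.
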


\begin{proof}
The lower and upper bounds $\chi_{\alpha,\beta}$ and $\overline{\chi}_{\alpha,\beta}$  must lie between $\frac{(\alpha-1)|V(H)|}{\alpha(H)}$ and $\alpha_{\beta}(H)$ respectively.  Hence if \[\frac{(\alpha-1)|V(H)|}{\alpha(H)} > \alpha_{\beta}(H), \mbox{ that is } |V(H)| > \frac{\alpha(H)\alpha_{\beta}(H)}{\alpha-1},\] then no $(\alpha,\beta)$-colouring of $H$ exists.
\end{proof}.

We now start by proving a  lemma inspired by the well-known rearrangement inequality  (as cited in \cite{hardy1952inequalities}).

\begin{lemma} \label{setinter}
 
Let $H=H(n,r,q \mid \sigma)$ be a $\sigma$-hypergraph with $\sigma=(a_1,\ldots,a_s)$  and $a_1 \geq a_2 \geq \ldots \geq a_s \geq 1$.  Let $B$ be a subset of $V(H)$ and let $B_i= B  \cap V_i$ with $|B_i|= b_i$ where $b_1 \geq b_2 \geq \ldots \geq b_n$.   
Let $E^*$ be the edge with part $A_ i $ of cardinality $a_i$ of $\sigma$  located in $V_i$, such that $A_i \subset B_i$ or $B_i \subset A_i$  when $a_i < b_i$ or $a_i \geq b_i$ respectively.  Then $|E^* \cap B|=\max\{|E \cap B|: E \in E(H)\}$.
 \end{lemma}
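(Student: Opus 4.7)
The plan is to compute $|E^* \cap B|$ explicitly and then show no other edge exceeds this value. By construction, the part $A_i$ of $E^*$ in $V_i$ satisfies $|A_i \cap B_i| = \min(a_i, b_i)$: if $a_i < b_i$ then $A_i \subset B_i$ gives $|A_i \cap B_i| = a_i$, while if $a_i \geq b_i$ then $B_i \subset A_i$ gives $|A_i \cap B_i| = b_i$. Summing, $|E^* \cap B| = \sum_{i=1}^{s} \min(a_i, b_i)$.

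For any edge $E$ whose $s$ parts lie in classes $V_{j_1}, \ldots, V_{j_s}$ (with $j_1 < \cdots < j_s$) and have sizes $a_{\pi(1)}, \ldots, a_{\pi(s)}$ for some permutation $\pi$ of $\{1,\ldots,s\}$, the trivial bound $|E \cap B_{j_k}| \leq \min(a_{\pi(k)}, b_{j_k})$ yields $|E \cap B| \leq \sum_{k=1}^{s} \min(a_{\pi(k)}, b_{j_k})$. I would then show this upper bound is itself at most $\sum_{i=1}^s \min(a_i, b_i)$ via a two-step reduction. First, if some $j_k > s$, then by pigeonhole there exists an unused index $i \leq s$; since $b_i \geq b_s \geq b_{j_k}$, replacing $j_k$ by $i$ in the sum cannot decrease any summand. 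Iterating concentrates all parts into the $s$ classes $V_1, \ldots, V_s$. Second, I would reduce $\pi$ to the identity by adjacent transpositions in bubble-sort fashion; each such step requires the inequality
\[\min(a, b_i) + \min(a', b_j) \geq \min(a', b_i) + \min(a, b_j)\]
whenever $a \geq a'$ and $b_i \geq b_j$, expressing that pairing larger part sizes with larger class intersections is preferable.

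The main obstacle is verifying this ``rearrangement inequality for $\min$,'' which does not follow verbatim from the classical rearrangement inequality alluded to in the lemma statement, since $\min$ is not of the appropriate algebraic form. It can nevertheless be settled by a short case analysis on the relative positions of $a', a, b_j, b_i$ on the real line, or more conceptually via the concavity of $x \mapsto \min(x,c)$. Once this inequality is in hand, the two-step reduction delivers $|E \cap B| \leq \sum_{i=1}^s \min(a_i, b_i) = |E^* \cap B|$ for every edge $E$, and since the bound is attained by $E^*$ itself, the lemma follows.
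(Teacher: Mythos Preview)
Your proposal is correct and follows essentially the same two-step strategy as the paper: first relocate any part lying in some $V_z$ with $z>s$ into an unused class among $V_1,\ldots,V_s$, then sort the parts into the identity arrangement via transpositions, each justified by the inequality $\min(a,b)+\min(a',b')\geq \min(a',b)+\min(a,b')$ for $a\geq a'$, $b\geq b'$. The paper carries this out directly on edges (building intermediate edges $E^J$, $E^Z$) and proves that transposition inequality by exactly the four-case analysis you anticipate; your passage to the numerical upper bound $\sum_k \min(a_{\pi(k)},b_{j_k})$ before sorting is just a clean repackaging of the same argument.
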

\begin{proof}
 
Consider the case when an edge  $E$ has some part $A_ i \not \in V_1,\ldots,V_s$. Then some class $V_ j$, $j=1 \ldots s$, contains no element from $E$.  The part $A_i$ is located in some $V_ z$ where $z > s$ , hence $b_ j \geq b_s \geq b_z$.
 
Let  $E^{J}$ be the edge with the part $A_i$  located in $V_ j$ (with maximum intersection with $B_ j$),  and with all other parts as in edge $E$. Then
\begin{eqnarray*}
| E^{J} \cap B | &=&  | (E^{J}\setminus A_i) \cap B)| + |B_ j \cap  A_i | \\
 &=& | (E \setminus A_i ) \cap B | + |B_ j  \cap A_ i | \\
&\geq& (E \setminus A_i ) \cap  B | + |B_z \cap  A_ i | 
\end{eqnarray*}

and hence $| E^{J} \cap B | \geq  | E \cap B|$  since $b_ j \geq b_ z$.

Therefore, we need only consider edges whose parts are located in $V_1,\ldots,V_s$,  with maximum intersection with the parts of $B$.

Now suppose there exists $i$, $1 \leq i \leq s$, such that $A_i$ is not located in $V_i$ and let $i$ be the smallest such value. Then some part $A_ j$  is located in $V_i$, where $ i < j$,  because for $j < i$, $A_ j$ is located in $V_ j$ by definition of the smallest $i$.   Also $A_ i$ is located in some $V_ z$ for $z \geq i +1$.  Observe first that  $a_ i \geq a _ j$ and $b_ i \geq b_z$.

Let us replace the position of the parts $A_ i$ and $A_ j$ to get an edge $E^Z$ so that $A_ i$ is now located in $V_ i$ with maximum intersection with $B_ i$ and $A_ j$ is now located in $V_z$  with maximum intersection with $B_z$.  We need to consider the following cases:

\begin{enumerate}[i.]
\item{Consider the case when $b_i \geq a_i$ and $b_z \geq a_j$.  Then clearly \[|A_ i \cap B_i | + |A_ j \cap B_z | \geq  |A_ j \cap B_i | + |A_ i \cap B_z | \mbox{ and hence } | E^Z \cap  B | \geq | E \cap  B|\]}
\item{ Consider the case when $b_i \geq a_i$ and  $b_z  \leq a_ j$.  Then clearly  \[|A_ j \cap B_z | = |A_ i \cap B_z | \mbox{ and } |A_ j \cap B_i | \leq |A_ i \cap B_i |,\]  and once again  \[|A_ i \cap B_i | + |A_ j \cap B_z | \geq  |A_ j \cap B_i | + | A_ i \cap B_z | \mbox{ hence } | E^ Z \cap B | \geq | E \cap  B|.\]}
\item{Consider the case when $b_i < a_i$ and $b_z \geq a_ j$.  Then  \[| A_i \cap B_i | + |A_ j \cap B_z | = b_ i + a_ j  \mbox { while } | A_ i \cap B_z |+ |A_ j \cap B_i | =  \min\{ a_i ,b_z\} + \min\{ a_j ,b_i\}=  b_z+ a_j,\] since $a_ i > b_ i  \geq b_ z \geq a_ j$. Hence $ | E^ Z \cap B | \geq | E \cap  B|$.}
\item{  Finally, consider the case when $ b_i < a_i$ and $b_z <  a_ j$.  Then  \[|A_ i \cap B_i | + |A_ j \cap  B_z | = b_ i + b_z  \mbox{ while } | A_ i \cap B_z |+ |A_ j \cap B_i | =  \min\{ a_i ,b_z\} + min\{ a_j ,b_i\}     \leq   b_z + b_i,\] again giving $ | E^ Z \cap B | \geq | E \cap  B|$.}
\end{enumerate}

Hence we can relocate the part $A_i$ which was located in $V_ j$ (for some $j >i$), and put it in $V_i$.    Then the smallest $i$ for which $A_i$ is not optimally located in $V_i$ has increased, and we can repeat the process until $A_i$ is located in $V_i$ for all $ 1 \leq i \leq s$, giving the edge $E^*$ as required.
\end{proof}

Consider the $\sigma$-hypergraph $H=H(n,r,q \mid \sigma)$, with $\sigma=(a_1,\ldots,a_s)$, and $a_1 \geq a_2 \geq \ldots \geq a_s  \geq 1$. 

Let $1 \leq k \leq r-1$. Consider the sequence $B=(b_1,b_2,\ldots,b_n)$, where $b_1 \geq b_2 \geq \ldots \geq b_s \geq \ldots \geq b_n$, and for $j \geq s=s(\sigma)$, $b_j=b_s$, and $q \geq \max\{a_1,b_1\}$.  Then this sequence is said to be \emph{$(q,k,\sigma)$-feasible} if \[\sum_{i=1}^{s} \min\{a_i,b_i\} = k.\]  

\begin{lemma} \label{sigma_feas}

Let $B=(b_1,b_2,\ldots,b_n)$ be a $(q,k,\sigma)$-feasible sequence.  Then there exists $t=t(B) \geq 1$ such that for $j<t \leq s$, $b_j \geq a_j$ while $b_t<a_t$.
\end{lemma}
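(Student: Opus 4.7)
The plan is to prove the existence of $t$ by a short counting argument using the feasibility condition together with the fact that $k \leq r-1$.

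First I would observe that $\sum_{i=1}^{s} a_i = r$ since $\sigma = (a_1,\ldots,a_s)$ is a partition of $r$. Combined with the hypothesis $k \leq r-1$, the feasibility equation $\sum_{i=1}^{s}\min\{a_i,b_i\} = k$ already tells us that the quantity $\sum_{i=1}^s \min\{a_i,b_i\}$ is strictly less than $\sum_{i=1}^s a_i$. This forces at least one term $\min\{a_i,b_i\}$ to be strictly smaller than $a_i$, which happens precisely when $b_i < a_i$.

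The next step is to define $t$ as the smallest index $i \in \{1,2,\ldots,s\}$ for which $b_i < a_i$; such an index exists by the paragraph above. By minimality of $t$, every $j$ with $j < t$ satisfies $b_j \geq a_j$, while $b_t < a_t$ by construction. Moreover $t \geq 1$ is automatic since $t$ is a positive integer. This gives the desired $t$.

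I do not foresee an obstacle: the lemma is essentially a book-keeping fact that extracts the first violation of the inequality $b_j \geq a_j$ from the deficit between $k$ and $r$. The only thing to be careful about is that the condition $b_1 \geq b_2 \geq \ldots \geq b_n$ is \emph{not} needed for the existence of $t$, but it is consistent with how $t$ will be used later together with Lemma \ref{setinter} to build the extremal edge $E^*$.
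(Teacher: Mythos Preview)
Your proof is correct and is essentially the same as the paper's: both hinge on the observation that if $b_i \geq a_i$ for every $i \leq s$ then $\sum_{i=1}^{s}\min\{a_i,b_i\}=\sum_{i=1}^{s}a_i=r>k$, contradicting feasibility. The paper phrases this as a one-line proof by contradiction, whereas you phrase it constructively by taking $t$ to be the least index with $b_t<a_t$; these are two sides of the same trivial argument.
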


\begin{proof}
 
If such $t=t(B)$ does not exist, then \[\sum_{i=1}^{s} \min\{a_i,b_i\} = \sum_{i=1}^{s}a_i = r >k,\] a contradiction.
\end{proof}

\begin{lemma}
For a given partition $\sigma=(a_1,\ldots,a_s)$ where \[\sum_{i=1}^{s}a_i=r,\]  there exists at least one $(q,k,\sigma)$-feasible sequence for all values of $k$ such that $1 \leq k \leq r-1$.
\end{lemma}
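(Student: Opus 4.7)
The plan is to exhibit, for each $k$ with $1 \leq k \leq r-1$, an explicit $(q,k,\sigma)$-feasible sequence $B$, rather than relying on a counting or continuity argument.  Setting $s_0 := 0$ and $s_j := a_1 + \cdots + a_j$ for $1 \leq j \leq s$, we have $0 = s_0 < s_1 < \cdots < s_s = r$, so for the given $k$ there is a unique index $j \in \{1,\ldots,s\}$ with $s_{j-1} < k \leq s_j$.

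For this $j$ I would define $B = (b_1, \ldots, b_n)$ by putting $b_i = a_i$ for $1 \leq i \leq j-1$, $b_j := k - s_{j-1}$, and then extending by $b_i := 0$ for $j < i \leq n$ when $j < s$, or by $b_i := k - s_{s-1}$ for $s \leq i \leq n$ when $j = s$.  Verifying the three defining properties is then routine: the chain $b_1 \geq \cdots \geq b_s \geq \cdots \geq b_n$ follows from $b_j = k - s_{j-1} \leq a_j \leq a_{j-1} = b_{j-1}$ together with the fact that the tail is either $0$ or equal to $b_j$; the stabilisation $b_i = b_s$ for $i \geq s$ holds by construction; and the bound $q \geq \max\{a_1, b_1\} = a_1$ is part of the standing hypothesis on $H$.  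Feasibility itself is a one-line calculation,
\[
\sum_{i=1}^{s} \min\{a_i, b_i\} \;=\; \sum_{i=1}^{j-1} a_i \;+\; (k - s_{j-1}) \;+\; 0 \;=\; k,
\]
where the middle term uses $k - s_{j-1} \leq a_j$ and the trailing zeros arise from $b_{j+1} = \cdots = b_s = 0$ when $j < s$.

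I do not anticipate a real obstacle here; the construction is essentially forced once one recognises the partial-sum partition of $\{1, \ldots, r-1\}$ into the half-open intervals $(s_{j-1}, s_j]$.  The only point that calls for mild care is the boundary case $j = s$, where the common value $b_s = b_{s+1} = \cdots = b_n$ must be set to $k - s_{s-1}$ rather than $0$ in order to respect the stabilisation condition $b_i = b_s$ for $i \geq s$.
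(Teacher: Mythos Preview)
Your construction is correct. The verification is complete as written: the partial-sum decomposition $\{1,\ldots,r-1\} = \bigcup_j (s_{j-1},s_j]$ locates a unique $j$, and your choice of $b_i$'s visibly satisfies monotonicity, the tail condition $b_i = b_s$ for $i\geq s$, the bound $b_1 \leq a_1 \leq q$, and the feasibility equation $\sum_{i=1}^s \min\{a_i,b_i\}=k$. The boundary case $j=s$ is handled properly.

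The paper's proof is different in spirit: it argues by downward induction on $k$, starting from the sequence $(a_1,\ldots,a_{s-1},a_s-1,\ldots,a_s-1)$ for $k=r-1$ and then, given a $(q,t,\sigma)$-feasible sequence, producing a $(q,t-1,\sigma)$-feasible one by decrementing the rightmost positive entry (together with the tail if that entry is $b_s$). Your approach is a direct closed-form construction for each $k$, which has the advantage of giving the sequence explicitly without tracing the recursion; the paper's inductive step, on the other hand, makes it transparent that one can pass from any feasible sequence to one for the next smaller $k$, which is a slightly more general statement. In fact, unwinding the paper's recursion from its chosen starting point yields exactly your sequences, so the two arguments produce the same witnesses by different bookkeeping.
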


\begin{proof}
Consider the sequence where $b_i=a_i$ for $1 \leq i \leq s$, and $b_j=b_s=a_s$ for $j > s$.

Now consider the sequence obtained by setting $b_j=b_s-1$ for $j \geq s$.  Then \[ \sum_{i=1}^{s} \min\{a_i,b_i\}=r-1,\] giving an $(q,r-1,\sigma)$-feasible sequence.

Now let us assume $ 1 \leq t<r-1$, and let $(b_1,\ldots,b_s,\ldots,b_n)$ be a $(q,t,\sigma)$-feasible sequence.  If $b_s>0$, then the sequence obtained by subtracting 1 from $b_i$, for $ s \leq i \leq n$ is a $(q,t-1,\sigma)$-feasible sequence.  Otherwise for some value of $i$, $1 \leq i < s$, $b_i>0$.  Let $j$ be the largest such index.  Then the sequence $(b_1,b_2,\ldots,b_j-1,0,\ldots,0)$ is a $(q,t-1,\sigma)$-feasible sequence.

Hence, by induction, there exists a $(q,k,\sigma)$-feasible sequence for all $ 1 \leq k \leq r-1$.

\end{proof}

A $(q, k,\sigma)$-feasible sequence $B^* = (b^*_1,\ldots,b^*_n )$ is said to \emph{dominate} the $(q, k,\sigma)$-feasible sequence $B = ( b_1,\ldots,b_n)$  if  $b^*_i \geq b_i$ for all $1 \leq i \leq n$ and \[\sum_{i=1}^{n} b^*_i >  \sum_{i=1}^{n} b_i.\]
 
A  $(q,k,\sigma)$-feasible sequence $B$ is \emph{maximal} if it is not dominated by any $(q,k,\sigma)$-feasible sequence $B^* $.

Let $M(q,k,\sigma)$ be the set of all maximal $(q,k,\sigma)$-feasible sequences.

\begin{lemma} \label{max_feasible}
Let  $\sigma =(a_1,\ldots,a_s)$  with $a_1 \geq a_2 \geq \ldots \geq a_s \geq 1$  be a partition of $r$.  Let $B  = (b_1,\ldots,b_n)$ be a $(q,k,\sigma)$-feasible sequence.  If $B$  is not a maximal $(q,k,\sigma)$-feasible sequence then it  can be extended to a maximal $(q,k,\sigma)$-feasible sequence $B^* =(b^*_1,\ldots,b^*_n)$  which dominates $B$.
\end{lemma}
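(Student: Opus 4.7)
The plan is to use a direct finiteness argument. Since every coordinate of a $(q,k,\sigma)$-feasible sequence is bounded above by $q$, there are only finitely many such sequences in total. Let $\mathcal{F}(B)$ denote the collection of $(q,k,\sigma)$-feasible sequences $C=(c_1,\ldots,c_n)$ satisfying $c_i \geq b_i$ for every $i$; this set is finite and non-empty, as it contains $B$ itself. I would then choose $B^* \in \mathcal{F}(B)$ that maximises $\sum_{i=1}^{n} c_i$.

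Since $B$ is assumed not to be maximal, by definition there exists a $(q,k,\sigma)$-feasible sequence $C$ that dominates $B$, i.e.\ satisfies $c_i \geq b_i$ for all $i$ and $\sum_{i=1}^n c_i > \sum_{i=1}^n b_i$. Such a $C$ belongs to $\mathcal{F}(B)$, so the maximum sum attained on $\mathcal{F}(B)$ is strictly greater than $\sum_{i=1}^n b_i$, which forces $\sum_{i=1}^n b^*_i > \sum_{i=1}^n b_i$. Combined with $b^*_i \geq b_i$, this shows $B^*$ dominates $B$. To check that $B^*$ is itself maximal, I would argue by contradiction: if some $(q,k,\sigma)$-feasible $B^{**}$ dominated $B^*$, then $b^{**}_i \geq b^*_i \geq b_i$ for every $i$ and $\sum_{i=1}^n b^{**}_i > \sum_{i=1}^n b^*_i$, placing $B^{**}$ inside $\mathcal{F}(B)$ with strictly larger sum than $B^*$, contradicting the choice of $B^*$.

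There is really no substantial obstacle here, since the lemma is in essence the statement that on a finite partial order every element lies below a maximal one. The only thing to watch is that the several defining conditions of $(q,k,\sigma)$-feasibility, namely the monotonicity $b_1 \geq \cdots \geq b_n$, the requirement $b_j = b_s$ for $j \geq s$, the cap $q \geq \max\{a_1,b_1\}$, and the equation $\sum_{i=1}^{s} \min\{a_i,b_i\} = k$, are all absorbed into membership in $\mathcal{F}(B)$ and are therefore inherited automatically by $B^*$.
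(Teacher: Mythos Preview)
Your proof is correct. Both your argument and the paper's ultimately rest on the same finiteness observation (each coordinate is bounded by $q$), but the routes differ slightly. The paper argues iteratively: if $B$ is not maximal it finds an index $i$ at which $b_i$ can be bumped to $b_i+1$ while preserving feasibility, and repeats until the process terminates (which it must, since $\sum b_i \leq nq$). You instead take the maximum of $\sum c_i$ over the finite set $\mathcal{F}(B)$ in one stroke and verify maximality by contradiction. Your approach is cleaner and sidesteps a point the paper leaves implicit, namely that the existence of \emph{some} dominating feasible sequence guarantees one obtainable by a single unit increment; the paper's incremental picture is, on the other hand, slightly more constructive in flavour. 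Either way the content is the observation that domination is a partial order on a finite set, so every element lies below a maximal one.
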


\begin{proof}
Suppose $B$ is not a maximal $(q,k,\sigma)$-feasible sequence.  
 
Then by definition of maximality there is a $(q,k,\sigma)$-feasible sequence $B^*$ that dominates it, which means that for some $i$, $1 \leq i \leq s$ , we can replace $b_ i$ by $b_i^*= b_i +1$, preserving \[\sum_{j=1,j \not =i}^{s} \min \{a_ j,b_ j \} + \min\{ a_i,b^*_i\}=k\]   and monotonicity, and keeping $q \geq \max\{a_1,b_1\}$ (or $q \geq \max\{a_1,b_1^*\}$ if $i=1$), and in case $i = s$,  we also replace $b_j$ by $b^*_ j = b_s +1$ for $j\geq s$. This gives the sequence $B^*=(b^*_1,\ldots,b^*_n)$ which is $(q,k,\sigma)$-feasible and is such that $b^*_i \geq b_i$ for all $1 \leq i \leq n$, $q \leq \max \{a_1,b_1^*\}$ and \[\sum_{i=1}^{n}b^*_i > \sum_{i=1}^{n}b_i\].   
 
This process can be repeated but must terminate, since each time we increase \[\sum_{i=1}^{n} b_i^*\] by at least 1, and trivially $b_i \leq q$ for $1 \leq i \leq n$,  hence \[\sum_{i=1}^{n} b_i^* \leq qn.\]
 
Therefore the process gives a maximal $(q,k,\sigma)$-feasible sequence $B^*$, such that \[\sum_{i=1}^{n}b^*_i > \sum_{i=1}^{n}b_i\]. 
\end{proof}
 
We now present our main result for $k$-independence in $\sigma$-hypergraphs.

\begin{theorem} \label{alpha_k}
Consider $H=H(n,r,q \mid \sigma)$ with $\sigma = (a_1,\ldots,a_s)$  where $a_1 \geq a_2 \geq \ldots \geq a_s  \geq 1$.   Consider $1 \leq k \leq r-1$.  Then  \[\alpha_{k}(H) = \max \{  q(t(B)-1) + \sum_{i=t}^{s}b_i+ (n-s)b_s  : B \in M(q,k,\sigma)\}.\]
\end{theorem}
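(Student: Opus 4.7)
The plan is to establish the two inequalities $\leq$ and $\geq$ separately, in both cases leaning on Lemma \ref{setinter} to reduce $k$-independence of a set $S\subseteq V(H)$ to the numerical condition $\sum_{i=1}^{s}\min\{a_i,|S\cap V_i|\}\leq k$, once the intersection sizes $|S\cap V_i|$ have been arranged in nonincreasing order.

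For the achievability direction ($\geq$), fix $B=(b_1,\ldots,b_n)\in M(q,k,\sigma)$ with parameter $t=t(B)$ and build a set $S$ by taking all $q$ vertices of $V_i$ for $i<t$, any $b_i$ vertices of $V_i$ for $t\leq i\leq s$, and any $b_s$ vertices of $V_j$ for $j>s$. By construction $|S|=q(t-1)+\sum_{i=t}^{s}b_i+(n-s)b_s$, which equals the right-hand expression. The definition of $t(B)$ gives $b_i\geq a_i$ for $i<t$, and since $q\geq a_1\geq a_i$, the sorted intersection sequence of $S$ reads $(q,\ldots,q,b_t,\ldots,b_s,b_s,\ldots,b_s)$. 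Lemma \ref{setinter} then gives the maximum edge intersection as $\sum_{i=1}^{t-1}a_i+\sum_{i=t}^{s}\min\{a_i,b_i\}=\sum_{i=1}^{s}\min\{a_i,b_i\}=k$ by feasibility of $B$, so $S$ is $k$-independent.

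For the upper bound ($\leq$), start with any $k$-independent $S$, let $c_i=|S\cap V_i|$ sorted so that $c_1\geq\ldots\geq c_n$, and embed the sequence $c$ inside a maximal feasible sequence. First, replacing $c_j$ by $c_s$ for every $j>s$ leaves the feasibility sum $\sum_{i=1}^{s}\min\{a_i,c_i\}$ unchanged while only enlarging the sequence. Second, while this sum is strictly less than $k$, let $t$ be the smallest index with $c_t<a_t$ (which must exist, else the sum would equal $r>k$) and increment $c_t$ by one: the chain $c_{t-1}\geq a_{t-1}\geq a_t>c_t$ (vacuous if $t=1$) preserves monotonicity, and $c_t+1\leq a_t\leq q$ keeps the value within $V_t$, while the sum rises by exactly $1$. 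Iterating produces a $(q,k,\sigma)$-feasible sequence $B'$ dominating $c$ coordinatewise, and Lemma \ref{max_feasible} then extends $B'$ to a maximal $B^*\in M(q,k,\sigma)$ with $b^*_i\geq c_i$ for all $i$. A left-to-right maximality argument forces $b^*_i=q$ for every $i<t(B^*)$: for such $i$ one has $\min\{a_i,b^*_i\}=a_i$, so incrementing $b^*_i$ preserves feasibility, and the inductive hypothesis $b^*_j=q$ for $j<i$ clears the monotonicity obstacle. Consequently $\sum_{i=1}^{n}b^*_i=q(t(B^*)-1)+\sum_{i=t(B^*)}^{s}b^*_i+(n-s)b^*_s$, and $|S|=\sum c_i\leq\sum b^*_i$ delivers the bound.

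The delicate point throughout is the saturation step: preserving monotonicity while incrementing demands always picking the \emph{smallest} index $t$ with $c_t<a_t$, which is exactly what activates the strict inequality $a_t>c_t$ and bounds the new coordinate below $q$. The companion ``fill from the left'' argument is what guarantees that maximal feasible sequences realize the compact formula $q(t-1)+\sum_{i=t}^{s}b_i+(n-s)b_s$ as the total sum $\sum_{i=1}^{n}b_i$, which is the bridge between the combinatorial maximum over $M(q,k,\sigma)$ and the cardinality of $S$.
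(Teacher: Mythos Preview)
Your argument is correct and follows essentially the paper's approach: both use Lemma~\ref{setinter} to reduce $k$-independence to the numerical feasibility condition, identify the optimum with a maximal $(q,k,\sigma)$-feasible sequence, and exploit maximality (via Lemma~\ref{max_feasible}) to force $b_i=q$ for all $i<t(B)$. The only difference is presentational---you separate the $\geq$ and $\leq$ directions explicitly, whereas the paper starts from a maximum $k$-independent set and extracts both bounds from its structure in one pass.
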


\begin{proof}

Let $B$ be  $k$-independent set of maximum cardinality in $H$. Let $ B _ i   =  B \cap V_ i $  and let $b_i=|B_i|$, and  we assume, without loss of generality,  that $q \geq b_1 \geq b_2 \geq \ldots\geq b_n \geq 0$.   By Lemma \ref{setinter} we can consider just the edge $E^*$ and look at its intersection with $B$.

Let us consider $b_1 ,b_2 ,\ldots,b_n$.  For $j \geq s=s(\sigma)$ we may take $b_j=b_s$, otherwise $B$ is not maximal since the maximum intersection of an edge with any $s$ classes is not larger than the intersection of $E^*$ with the first $s$ classes, by Lemma \ref{setinter}.  Since we are considering $E^*$, we assume the $A_i$ is located in $V_i$ with optimal (maximum) intersection with $B_i$.

We observe that \[|E^* \cap  B| =  \sum_{i=1}^{s} |A_i \cap  B_i |  = \sum_{i=1}^{s} \min\{ a_i,b_i\} \leq k\]   since $B$ is $k$-independent.  Hence, for some integer $t$, $t \leq s$  we have  $b_{t-1} \geq a_{t-1}$ but $b_t  < a_ t $, by Lemma \ref{sigma_feas}.

Since $B$ is of maximum cardinality then \[|E^* \cap B|=\sum_{i=1}^{s} |A_i \cap B_i|=k,\] otherwise, by Lemma \ref{setinter}, for all edges $E \in E(H)$, $|E \cap B| \leq |E^* \cap B|<k$ and we can add a vertex to $B$ in $B_t$ where $t$ is the smallest index for which $b_i<a_i$, to get a set $B^{*}$, which is still monotonic since either $t \geq 2$ and $b_{t-1} \geq a_{t-1} \geq a_t > b_t$ or $t=1$ in which case $b_1$ is the largest element anyway.  Therefore $B^*$ is such that $|E^* \cap B^*| = |E^* \cap B|+1 \leq k$, contradicting the maximality of $B$.

Therefore we can conclude that $(b_1,b_2,\ldots,b_n)$ is a $(q,k,\sigma)$-feasible sequence.  We may assume the $B$ is infact a maximal $(q,k,\sigma)$-feasible sequence, otherwise by Lemma \ref{max_feasible}, $B$ may be extended to a maximal $(q,k,\sigma)$-feasible sequence $B^*$ since we are assuming $q \geq \max \{a_1,b_1\}$.  This gives a $k$-independent set with cardinality greater than $|B|$, contradicting the maximality of the $k$-independent set $B$.

Now for every $j < t  =t(B)$ (as defined in Lemma \ref{sigma_feas}), $ \min\{ a_ j ,b_j\} = a_ j$.  By the maximality of $B$, for $j<t$, $b_j$ and $q$ are equal, because otherwise we can add vertices to $B_j$  to get $B^*$ with $|E^* \cap B^*| = |E^* \cap B|=k$ and $|B^*| \geq |B|$, a contradiction to the maximality of $B$.  Then \[|B|=(t-1)q + \sum_{i=t}^{s}b_i + (n-s)b_s,\] which implies \[\alpha_k(H)=\max \{ q(t(B)-1)+ \sum_{i=t}^{s}b_i + (n-s)b_s  : B \in  M(q,k,\sigma)\}.\]
\end{proof}

Note : One can observe that the computation of $\alpha_k(H)$ depends only on the structure of $\sigma$ since the number of created maximal $(q,k,\sigma)$-feasible sequences as well as the number of linear inequalities to be solved depend only  on $\sigma$, and hence it is independent of the number of vertices and edges in $H$.  Therefore for fixed $r$ this is done in $O(1)$ time.
\begin{corollary}
Given $H=H(n,r,q \mid \sigma)$, then the independence number of $H$ is \[\alpha(H) = \max\{ (j-1)q+ (a_j-1)(n-j+1): j=1,\ldots,s=s(\sigma)\}.\]
\end{corollary}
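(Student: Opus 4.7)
The plan is to derive this corollary directly from Theorem~\ref{alpha_k} by applying it with $k=r-1$ and explicitly identifying the maximal $(q,r-1,\sigma)$-feasible sequences. The key observation that drives the whole argument is that the feasibility constraint $\sum_{i=1}^{s}\min\{a_i,b_i\}=r-1=\sum_{i=1}^{s}a_i-1$ can be rewritten as $\sum_{i=1}^{s}(a_i-\min\{a_i,b_i\})=1$. Since each summand is a non-negative integer, exactly one index, call it $t$, contributes $1$ and all other indices contribute $0$. Thus $b_t=a_t-1$ and $b_i\geq a_i$ for every $i\in\{1,\ldots,s\}$ with $i\neq t$. In particular, $t$ must coincide with the index $t(B)$ of Lemma~\ref{sigma_feas}.

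Next I would determine the structure forced by monotonicity. For $i>t$ we have $a_i\leq b_i\leq b_t=a_t-1$, which is consistent only when $a_{t+1}<a_t$ (or $t=s$). For $i<t$, $b_i\geq a_i\geq a_t>b_t$ is automatic. To maximise the total $\sum b_i$, a maximal sequence should have $b_i$ as large as possible in every slot: this sets $b_i=q$ for $i<t$, $b_t=a_t-1$, and $b_i=a_t-1$ for every $i\geq t$ (including $i>s$, since the tail satisfies $b_j=b_s$). Substituting these values into the formula of Theorem~\ref{alpha_k} yields
\[
q(t-1)+\sum_{i=t}^{s}(a_t-1)+(n-s)(a_t-1)=(t-1)q+(a_t-1)(n-t+1).
\]

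Finally, I would address the fact that only indices $t$ with $a_t>a_{t+1}$ (or $t=s$) actually produce a maximal feasible sequence, whereas the corollary takes the maximum over all $j=1,\ldots,s$. A direct comparison shows that if $a_j=a_{j+1}$, then the value at index $j+1$ exceeds the value at index $j$ by $q-(a_j-1)\geq 1$ (using $q\geq a_1\geq a_j$), so indices violating the strict drop are always dominated by the next index and do not affect the maximum. Combining this with the formula above gives exactly $\max\{(j-1)q+(a_j-1)(n-j+1):1\leq j\leq s\}$.

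The routine step is the bookkeeping of the max; the main conceptual point is the ``unit deficit'' characterisation of $(q,r-1,\sigma)$-feasibility and the deduction that a maximal sequence is completely determined by its index $t=t(B)$, forcing $b_i$ to take one of only two values ($q$ or $a_t-1$). Once that is clear the computation is straightforward.
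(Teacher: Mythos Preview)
Your proof is correct and follows essentially the same route as the paper: both apply Theorem~\ref{alpha_k} with $k=r-1$, both argue that in any $(q,r-1,\sigma)$-feasible sequence exactly one index $t$ has $b_t=a_t-1$ while all other indices satisfy $b_i\ge a_i$, and both conclude that a maximal such sequence has the shape $(q,\ldots,q,a_t-1,\ldots,a_t-1)$. Your ``unit deficit'' rewriting $\sum_{i}(a_i-\min\{a_i,b_i\})=1$ is a cleaner way to reach the key structural fact than the paper's two case-by-case contradictions, and you also supply the small verification (omitted in the paper) that indices $j$ with $a_j=a_{j+1}$, which do not correspond to maximal feasible sequences, are always dominated by the next index and therefore do not change the maximum.
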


\begin{proof}
It is clear that for an $r$-uniform hypergraph, $\alpha(H)=\alpha_{r-1}(H)$.  Therefore by Theorem \ref{alpha_k}, \[ \alpha(H) =\max \{ q(t(B)-1)+\sum_{i=t}^{s}b_i+ (n-s)b_s  : B \in M(q,r-1,\sigma)\}.\]  Now a $(q,r-1,\sigma)$-feasible sequence $(b_1,b_2,\ldots,b_n)$ is such that \[\sum_{i=1}^{s} \min\{a_i,b_i\} =r-1.\]  

A maximal $(q,r-1,\sigma)$-feasible sequence must be of the form \[(b_1,b_2,\ldots,b_n)=(q,q,\ldots,q,a_j-1,a_j-1,\ldots,a_j-1)\] since:
 \begin{enumerate}
\item{If $b_j \leq a_j -2$, for some $ j$,  $1 \leq j \leq s$ ,  then \[\sum_{i=1}^{s} \min\{ a_i,b_i\} \leq r-2,\] contradicting the fact proved in Theorem \ref{alpha_k} that \[|E^* \cap B|  =\sum_{i=1}^{i=s} \min\{ a_i,b_i\}   = r-1.\]}
 \item{If $b_i = a_i-1$ and $b_j  = a_j -1$ for $1 \leq i < j \leq s$  then again \[\sum_{i=1}^{s} \min\{ a_i,b_i\} \leq r-2,\] a contradiction.}
 \end{enumerate}

Hence, for precisely one index $j$, $ b_ j = a_j - 1$ and for all other indices, $b_ i =q$ if $ i < j$ while $b_i  \leq a_j -1$ if $i >j$.  But then \[\sum_{i=1}^{s} \min\{ a_i,b_i\} \leq  a_1+,\ldots,+a_{j-1}+ (a_j - 1)(s-j+1) \leq r-1,\] and equality holds if and only if $a_{k} =  a_j  -1$ for $j \leq k \leq s$, for otherwise the sum is at most $r-2$.  

So, all maximal $(q,r-1,\sigma)$-feasible sequences must have the form 
$(b_1,\ldots,b_n)$ $ = (q,q,\ldots,q,a_j-1,a_j-1,\ldots,a_j-1)$ for some $j$ , $1 \leq j \leq s$.  Note, however, that not every sequence of this form is in fact a maximal $(q,r-1, \sigma)$-feasible sequence. So this form is necessary but not sufficient  for a maximal $(q,r-1, \sigma)$-feasible sequence.  Therefore, 
\[\alpha(H)=\max\{q(j-1) + (a_j-1)(n-j+1): j=1,\ldots,s\},\] as stated.
\end{proof}

\bigskip
Let us look at an example:  let $H=H(n,9,q \mid \sigma)$ where $\sigma=(4,3,2)$.  Let us consider $\alpha_k(H)$ for $k=6,7,8$.  
\medskip

\noindent \textbf{k=8}

We compute $\alpha_8(H)=\alpha(H)$.  Then the maximal $(q,8,\sigma)$-feasible sequences are:
\begin{enumerate}
\item{$(3,3,3,\ldots,3)$ when $t=1$, giving \[\sum_{i=1}^{n}b_i=3n\]}
\item{$(q,2,2,\ldots,2)$ when $t=2$, giving \[\sum_{i=1}^{n}b_i=q+2(n-1)\]}
\item{$(q,q,1,\ldots,1)$ when $t=3$, giving \[\sum_{i=1}^{n}b_i=2q + n-2\]}
\end{enumerate}

Hence $\alpha(H) = \max\{3n,q+2n-2,2q+n-2\}$.  If $n \geq q$ then $\alpha(H)=3n$.  For $q >n$, $2q+n-2 > q+2n-2$ and hence in this case, $\alpha(H)=2q+n-2$.
\medskip

\noindent \textbf{k=7}

We now consider $\alpha_7(H)$.  Then the maximal $(q,7,\sigma)$-feasible sequences are:
\begin{enumerate}
\item{$(3,2,2,\ldots,2)$ when $t=1$, giving \[\sum_{i=1}^{n}b_i=3+2(n-1) = 2n+1\]}
\item{$(q,2,1,1,\ldots,1)$ when $t=2$, giving \[\sum_{i=1}^{n}b_i=q+2+n-2=q+n\]}
\item{$(q,q,0,\ldots,0)$ when $t=3$, giving \[\sum_{i=1}^{n}b_i=2q\]}
\end{enumerate}

Hence, if $n \geq q$, $\alpha_7(H)=2n+1$ while if $n<q$, $\alpha_7(H)=2q$.

\medskip

\noindent \textbf{k=6}

We now consider $\alpha_6(H)$.  Then the maximal $(q,6,\sigma)$-feasible sequences are:
\begin{enumerate}
\item{$(3,3,0,\ldots,0)$ or $(3,2,1,\ldots,1)$ or $(2,2,\ldots,2)$ when $t=1$ giving \[\sum_{i=1}^{n}b_i=6,  \sum_{i=1}^{n}b_i=3+2 + n-2 = n+3, \mbox{ and } \sum_{i=1}^{n}b_i=2n\] respectively.  Since $n \geq 3$, the maximum is $2n$.}
\item{$(q,2,0,0,\ldots,0)$ or $(q,1,1,\ldots,1)$ when $t=2$ giving\[\sum_{i=1}^{n}b_i=q+2 \mbox{ and } \sum_{i=1}^{n}b_i=q+n-1\] respectively.  Again, since $n \geq 3$, the maximum is $q+n-1$. }
\item{none when $t=3$ }
\end{enumerate}

Hence, if $n \geq q-1$, $\alpha_6(H)=2n$ while if $n <q-1$, $\alpha_6(H)=q+n-1$.

\section{Matchings and  $\sigma$-hypergraphs}

We now consider matchings in $\sigma$-hypergraphs.  For the purpose of this section, we need to give more structure to the vertices of the hypergraph $H=H(n,r,q \mid \sigma)$ with $\sigma=(a_1,a_2,\ldots,a_s)$, and $a_1 \geq a_2 \geq \ldots \geq a_s$.   The classes making up the vertex set are ordered as $V_1,V_2,\ldots,V_n$ and, within each $V_i$, the vertices are ordered as $v_{1,i},v_{2,i},\ldots, v_{q,i}$.  We visualise the vertex set $V(H)$ as  a $q \times n$ grid whose first row is $v_{1,1},v_{1,2},\ldots,v_{1,n}$.  We sometimes refer to the vertices $v_{1,i},v_{2,i},\ldots, v_{k,i}$ as the top $k$ vertices of the class $V_i$, and to $v_{q-k+1,i},v_{q-k+2,i},\ldots, v_{q,i}$ as the bottom $k$ vertices of $V_i$.  The vertices $v_{k,i}$ and $v_{k+1,i}$ are said to be consecutive in $V_i$.  The class $V_1$ is called the first class of vertices, and $V_n$ is the last class; $V_i$ and $V_{i+1}$ are said to be consecutive classes.  A set of vertices contained in $h$ consecutive rows and $k$ consecutive classes of $V(H)$ is said to be an $h \times k$ subgrid of $V(H)$.

A matching $M_c$ is said to be in \emph{canonical form} if the parts $a_1,a_2,\ldots,a_s$ of any edge $E$ in $M_c$ are in consecutive classes, and if, within each class, the $a_j$ vertices in $E$ coming from that class are consecutive.  Any vertices not in the matching are all consecutive at the top, or bottom, of each respective class.  It is easy to see that a maximum matching of $H$ can be rearranged into one in canonical form.

\begin{lemma} \label{consecutive}
Let $H=H(n,r,q \mid \sigma)$ be a $\sigma$-hypergraph with $\sigma=(a_1,a_2,\ldots,a_s)$, $a_1 \geq a_2 \geq \ldots \geq a_s$.  Let $M$ be a maximum matching in $H$.  Then $M$ can be changed into a matching $M_c$ in canonical form.
\begin{enumerate}
\item{For every edge $E \in M_c$, the vertices in every part $a_i$ of $E$ are consecutive in their respective class.}
\item{The unmatched vertices are all consecutive at the top or bottom of each respective class.}
\end{enumerate}
\end{lemma}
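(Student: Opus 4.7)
The plan is to exploit a symmetry of $H$: within every class $V_i$, the $q$ vertices are interchangeable, since whether a set $K$ of size $r$ forms an edge depends only on the numbers $|K\cap V_j|$, not on the identity of vertices inside each $V_j$. Consequently, any product $\pi=(\pi_1,\ldots,\pi_n)$ of per-class permutations $\pi_i:V_i\to V_i$ extends to an automorphism of $H$; in particular, if $M$ is a maximum matching then $\pi(M)=\{\pi(E):E\in M\}$ is also a maximum matching.

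Given $M$, for each class $V_i$ let $c_i=|V_i\cap\bigcup_{E\in M}E|$, and enumerate (in some arbitrary order) the edges of $M$ that meet $V_i$ as $E_{j_1},E_{j_2},\ldots,E_{j_{t_i}}$ with respective part sizes $p_{1,i},p_{2,i},\ldots,p_{t_i,i}$ (each a member of $\sigma$) summing to $c_i$. Choose $\pi_i$ to be any bijection $V_i\to V_i$ with the following properties: it sends the $q-c_i$ unmatched vertices of $V_i$ onto the top block $\{v_{1,i},\ldots,v_{q-c_i,i}\}$; it sends the matched vertices onto the bottom block $\{v_{q-c_i+1,i},\ldots,v_{q,i}\}$; and inside that bottom block it places the $p_{1,i}$ vertices of $E_{j_1}\cap V_i$ first, then the $p_{2,i}$ vertices of $E_{j_2}\cap V_i$, and so on. Such a $\pi_i$ plainly exists, since we are only matching up sets of prescribed sizes.

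Now set $M_c:=\pi(M)$. Because $|\pi(E)\cap V_i|=|E\cap V_i|$ for every edge $E$ and every class $V_i$, $M_c$ is indeed a matching of $H$ of the same size as $M$. For item (1), the vertices of the part of $\pi(E_{j_k})$ lying in $V_i$ are, by construction, the consecutive slots assigned to $E_{j_k}$ inside the bottom block of $V_i$. For item (2), the unmatched vertices of $V_i$ in $M_c$ are exactly $\{v_{1,i},\ldots,v_{q-c_i,i}\}$, a consecutive block at the top. Choosing instead to place matched vertices at the top would give the other canonical option mentioned in the statement.

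The proof has no real obstacle: the whole force of the argument comes from the observation that per-class relabellings are automorphisms, so bringing a matching into canonical form reduces to choosing, independently within each $V_i$, a convenient bijection onto $V_i$. The only point that deserves a line of verification is that a single $\pi_i$ can simultaneously (i) separate matched from unmatched vertices and (ii) group matched vertices by edge; this is immediate because both requirements are simply set-size constraints on disjoint blocks of $V_i$.
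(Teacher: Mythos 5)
Your proposal is correct and takes essentially the same approach as the paper: both exploit the fact that permuting vertices within each class $V_i$ preserves the edge set (since edges depend only on the intersection cardinalities $|K\cap V_i|$), and then choose per-class relabellings that group each edge's part into a consecutive block and push the unmatched vertices to one end of the class. Your version is slightly more explicit in packaging all the requirements for a class into a single bijection $\pi_i$, but the underlying argument is identical.
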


\begin{proof}
Consider M, a maximum matching -- consider the vertices in the part $a_j$ in an edge $E \in M$ taken from the class $V_t$.  The vertices in this class can be reordered by some permutation so the the vertices in $a_j$ are consecutive in the class $V_t$.  This can be applied to every part $a_i$ in $E$ and creates a new edge $E_c$ which can replace edge $E$ in the matching.  This process can be repeated for every edge in $M$, without any effect on the already created new edge, to create a new matching $M_c$ in which the vertices of every part in every edge are consecutive in their respective class.

In a similar way, the unmatched vertices in any class can be rearranged so that they are consecutive in their respective class, and are the top or bottom vertices in this class.
\end{proof}

We will use this well-known Theorem by Frobenius in several places, and thus we state it here:

\begin{theorem} \label{Frobenius}
Let $a_1,a_2 $ be positive integers with $gcd(a_1,a_2)=1$.  Then for  $n \geq (a_1-1)(a_2-1)$, there are nonnegative integers $x$ and $y$ such that  $x a_1 +y a_2=n$.
\end{theorem}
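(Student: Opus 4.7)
My plan is to use the standard residue-class approach and to produce the required nonnegative integers $x$ and $y$ explicitly. Since $\gcd(a_1,a_2)=1$, the $a_2$ multiples $0,\; a_1,\; 2a_1,\;\ldots,\;(a_2-1)a_1$ are pairwise non-congruent modulo $a_2$ and therefore form a complete system of residues. Hence for every integer $n$ there is a unique $x$ with $0 \leq x \leq a_2-1$ satisfying $x a_1 \equiv n \pmod{a_2}$, and setting $y := (n-xa_1)/a_2$ automatically produces an integer $y$ together with the identity $xa_1 + ya_2 = n$.

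The substantive step is then to verify that $y \geq 0$ whenever $n \geq (a_1-1)(a_2-1)$. I would argue by contradiction: suppose $y \leq -1$. Then
\[
xa_1 \;=\; n - y a_2 \;\geq\; n + a_2 \;\geq\; (a_1-1)(a_2-1) + a_2 \;=\; a_1(a_2-1) + 1,
\]
which forces $x > a_2-1$, contradicting the restriction $x \leq a_2-1$. Thus $y \geq 0$, and $(x,y)$ is the pair of nonnegative integers required by the statement.

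No genuine obstacle arises; the entire argument collapses to the single arithmetic inequality above, whose content is just the identity $(a_1-1)(a_2-1) = a_1 a_2 - a_1 - a_2 + 1$. The proof is essentially the classical short proof of the two-variable Frobenius coin problem, and as a byproduct it shows that the threshold $(a_1-1)(a_2-1)$ is sharp, since taking $n = a_1 a_2 - a_1 - a_2$ forces $x = a_2-1$ and $y = -1$ in the construction above, so that $n$ fails to be representable.
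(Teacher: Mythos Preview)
Your proof is correct; it is the standard residue-class argument for the two-coin Frobenius problem, and every step checks out, including the sharpness remark. The paper itself does not prove this theorem at all --- it merely states it as a well-known fact of Frobenius and invokes it as a tool --- so there is nothing to compare your argument against.
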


\subsection{Divisibility Conditions}

In this section we look at divisibility conditions between certain parameters of a $\sigma$-hypergraph which imply the existence of certain types of matchings.   We start off with a result which gives a simple sufficient condition for the existence of a perfect matching in a $\sigma$-hypergraph.

\begin{lemma} \label{perfect1}
Consider $H=H(n,r,q \mid \sigma)$, where $\sigma=(a_1,\ldots,a_s)$, $n \geq s$ and $ q \geq r$. If $r$ $|$  $q$, then $H$ has a perfect matching.
\end{lemma}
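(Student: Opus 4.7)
The plan is to exhibit an explicit perfect matching. Since $r\mid q$, write $q=mr$ and split the $q\times n$ grid into $m$ horizontal \emph{blocks} of $r$ consecutive rows each. Each block is an $r\times n$ subgrid with $rn$ vertices, and it will suffice to match each block with $n$ vertex-disjoint edges of $H$: different blocks use disjoint rows, so the union accounts for all $nq=mnr$ vertices using $mn$ edges.

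Fix one block. Inside each class $V_k$, divide its $r$ rows of the block into $s$ consecutive strips $S_{1,k},S_{2,k},\ldots,S_{s,k}$ of sizes $a_1,a_2,\ldots,a_s$ (from top to bottom); this is possible because $a_1+\cdots+a_s=r$. Reading class indices modulo $n$, for each $i\in\{1,2,\ldots,n\}$ define the candidate edge
\[
E_i \;=\; \bigcup_{j=1}^{s} S_{j,\,i+j-1}.
\]
Equivalently, the $j$-th strip of class $V_k$ is assigned to edge $E_{k-j+1}$, so $V_k$ contributes to the $s$ distinct edges $E_k, E_{k-1},\ldots,E_{k-s+1}$ through strips of sizes $a_1,a_2,\ldots,a_s$ respectively.

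What remains is a three-part verification: (i) each $E_i$ is really an edge of $H$, because $n\ge s$ guarantees that the $s$ indices $i,i+1,\ldots,i+s-1$ (mod $n$) are distinct, so the nonzero intersections $|E_i\cap V_k|$ realise exactly the partition $\sigma$; (ii) within a block the edges are pairwise vertex-disjoint, since in every class the $s$ strips are sent to $s$ different edges and strips are disjoint by construction; (iii) the edges cover every vertex of the block, since the $s$ strips exhaust each class's $r$ rows. The only delicate point is the cyclic wrap-around: for $i$ near the end of the index range, $E_i$ uses classes near $V_n$ together with classes near $V_1$. The hypothesis $n\ge s$ is precisely what ensures such wrapped edges still use $s$ distinct classes, and so remain genuine edges of $H$; everything else is routine counting of strips.
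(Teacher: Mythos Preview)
Your proof is correct and follows essentially the same approach as the paper: reduce to an $r\times n$ block (since $r\mid q$), partition each column into consecutive strips of sizes $a_1,\ldots,a_s$, and form $n$ edges by taking the strips ``in diagonal fashion'' with class indices read modulo $n$. You have in fact been a bit more explicit than the paper in spelling out why $n\ge s$ is needed for the wrap-around edges to hit $s$ distinct classes, and in verifying disjointness and coverage.
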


\begin{proof}
It is clear that we need only show that the top $r \times n$ grid of vertices of $H$ afford a perfect matching.  Therefore consider only the top $r$ vertices in each of the classes $V_1,V_2,\ldots,V_n$. Let each column of $r$ vertices be partitioned into $s$ consecutive parts of sizes $a_1,a_2,\ldots,a_s$.  The part $a_i$ in $V_j$ will be referred to as the $i^{th}$ part in $V_j$.   The edge  $E_1$ is formed by taking the top  $a_1$ vertices from $V_1$, the second part of size $a_2$ from $V_2$  and so on, ``in diagonal fashion".  This repeated for $E_2$ by ``shift one class to the right", taking the top $a_1$ vertices from $V_2$, the second part from $V_3$ etc.  In general, the edge $E_j$, $1 \leq j \leq n$, takes the first part from $V_j$, the second part from $V_{j+1}$ and in general the $k^{th}$ part from $V_{j+k-1}$, for $1 \leq k \leq s$, with addition modulo $n$.  This gives a perfect matching of the top $r \times n$ grid consisting of $n$ edges.
\end{proof}

In contrast with the above result we next show that certain $\sigma$-hypergraphs do not have a perfect matching, and that in a maximum matching there may be many unmatched vertices.  We define $gcd(\sigma) = gcd( a_1,\ldots,a_s)$  for $\sigma = ( a_1,\ldots,a_s)$.  

\begin{lemma} \label{not_r_good}
Let $H=H(n,r,q \mid \sigma)$, where $\sigma=(a_1,\ldots,a_s)$, $n \geq s$ and $ q \geq r$.  Suppose $gcd(\sigma)=d \geq 2$, and $q=t\pmod d$ where $1 \leq t \leq d-1$. Then in a maximum matching of $H$, there are at least $tn$ vertices left unmatched.  Hence $\nu(H) \leq \frac{n(q-t)}{r}$.
\end{lemma}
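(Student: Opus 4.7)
The plan is to exploit the divisibility of all parts of $\sigma$ by $d=\gcd(\sigma)$ and then do a local count inside each class $V_i$.

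First I would make the key observation: for any edge $E$ of $H$ and any class $V_i$, the intersection size $|E\cap V_i|$ is either $0$ or equal to one of the parts $a_1,\ldots,a_s$ of $\sigma$. Since $d$ divides every $a_j$, it follows that $d \mid |E\cap V_i|$ for every edge $E$ and every class $V_i$.

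Next, given any matching $M$ in $H$ (in particular a maximum matching), the number $m_i$ of vertices of $V_i$ covered by the edges of $M$ equals $\sum_{E\in M}|E\cap V_i|$; being a sum of multiples of $d$, it is itself a multiple of $d$. Writing $q=\lfloor q/d\rfloor d + t$ with $1\le t\le d-1$, the largest multiple of $d$ not exceeding $q$ is $q-t$, and therefore $m_i\le q-t$ for every $i$. Consequently at least $t$ vertices of each class $V_i$ remain unmatched, yielding at least $tn$ unmatched vertices overall.

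Finally, since the matched vertices number at most $n(q-t)$ and each edge covers exactly $r$ vertices, I would conclude
\[
r\,\nu(H)\ \le\ n(q-t),\qquad\text{hence}\qquad \nu(H)\ \le\ \frac{n(q-t)}{r}.
\]
There is no real obstacle here; the whole argument rests on the single structural fact that every edge contributes a multiple of $d$ to each class, so this is essentially a one-line parity/divisibility count once that observation is in place. The only thing to be careful about is the phrasing: the bound holds for \emph{every} matching, and applying it to a maximum matching gives both the lower bound on unmatched vertices and the stated upper bound on $\nu(H)$.
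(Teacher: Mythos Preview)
Your proof is correct and follows essentially the same approach as the paper: both argue that since every part of $\sigma$ is divisible by $d$, any edge meets each class in a multiple of $d$ vertices, so at most $q-t$ vertices per class can be matched, leaving at least $tn$ unmatched. Your write-up is a bit more explicit in spelling out the summation $m_i=\sum_{E\in M}|E\cap V_i|$ and the final inequality $r\,\nu(H)\le n(q-t)$, but the underlying idea is identical.
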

\begin{proof}
Every edge in a maximum matching has all its parts divisible by $d \geq 2$. So the parts of every edge in a maximum matching cover $0\pmod d$ vertices in each class, and hence in each class there are at least $t$ vertices left unmatched.  As there are $n$ classes we have at least $nt$ vertices unmatched. 
\end{proof}

We now present a result which, in the next section, will  allow us to ``expand" a maximum matching in a $\sigma$-hypergraph to one in another $\sigma$-hypergraph with more vertices.

\begin{lemma} \label{expand}
Let $H=H(n,r,q \mid \sigma)$ with $\sigma=(a_1,a_2,\ldots,a_s)$, $gcd(\sigma)=d \geq 2$ and $q=md+t$ where $0 \leq t \leq d-1$.  let $H_m=H(n,r,md \mid \sigma)$ be the $\sigma$-hypergraph obtained from $H$ by deleting the top $t$ rows of the grid $V(H)$.  Let $H^*=H(n,\frac{r}{d},m \mid \sigma^*)$ be a $\sigma^*$-hypergraph where $\sigma^*$ is a partition of $\frac{r}{d}$ such that $\sigma^*=(\frac{a_1}{d},\frac{a_2}{d},\ldots,\frac{a_s}{d})$.  Then,
\begin{enumerate}
\item{There is a matching $M^*$ in $H^*$ corresponding to every maximum matching $M$ in $H$.}
\item{There is a matching $M$ in $H$ corresponding to every maximum matching $M^*$ in $H^*$.}
\item{Hence, $\nu(H)=\nu(H_m)=\nu(H^*)$.}
\end{enumerate}
\end{lemma}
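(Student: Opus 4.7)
The plan is to prove all three assertions simultaneously by constructing an explicit size-preserving bijection between matchings of $H_m$ and matchings of $H^*$, combined with the observation that every maximum matching of $H$ is already a matching of $H_m$.

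First I would dispose of $\nu(H)=\nu(H_m)$. The case $t=0$ is trivial since $H=H_m$, so assume $t\ge 1$. Since $H_m$ is an induced sub-hypergraph of $H$, clearly $\nu(H_m)\le\nu(H)$. Conversely, take any maximum matching $M$ of $H$ and put it into canonical form via Lemma \ref{consecutive}, placing unmatched vertices at the top of each class. Within any class $V_i$, the matched region is a disjoint union of parts of sizes divisible by $d$, so its cardinality is a multiple of $d$; thus the number of unmatched vertices in $V_i$ is $\equiv t\pmod d$. Combined with Lemma \ref{not_r_good}, which guarantees at least $t$ unmatched vertices per class, this shows the top $t$ vertices of every class are unmatched, hence $M$ is already a matching of $H_m$ and $\nu(H)\le\nu(H_m)$.

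Next I would install an auxiliary group structure on $H_m$: partition each class $V_i$ into $m$ consecutive blocks $G_{k,i}=\{v_{(k-1)d+1,i},\dots,v_{kd,i}\}$ of size $d$, and identify $G_{k,i}$ with the single vertex $v_{k,i}^*$ of the class $V_i^*$ of $H^*$. The key claim is that for any canonical matching $M$ of $H_m$, every part of every edge is a union of consecutive groups $G_{k,i}$. To verify this, let $u_i$ denote the number of unmatched vertices at the top of $V_i$; the matched region $\{v_{u_i+1,i},\dots,v_{md,i}\}$ has cardinality $md-u_i$ equal to a sum of part sizes (each divisible by $d$), so $u_i\equiv 0\pmod d$ and the matched region starts at a group boundary. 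Since parts are consecutive within this region and have sizes that are multiples of $d$, each part must span a whole number of consecutive groups.

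With alignment in hand, the bijection is immediate. Given a canonical matching $M$ of $H_m$, map each edge $E\in M$ to $\phi(E)\subseteq V(H^*)$ by collapsing every group that $E$ occupies to its corresponding vertex; $\phi(E)$ then has parts of sizes $a_1/d,\dots,a_s/d$ and so is a valid edge of $H^*$, and disjointness of groups ensures that $\phi(M)$ is a matching of $H^*$ with $|\phi(M)|=|M|$. Conversely, given a matching $M^*$ of $H^*$, expanding each occupied vertex $v_{k,i}^*$ back to the group $G_{k,i}$ turns every edge into a valid edge of $H_m$ with parts $a_1,\dots,a_s$ and preserves disjointness. Applying these two constructions to maximum matchings yields $\nu(H^*)\le\nu(H_m)\le\nu(H^*)$, hence $\nu(H)=\nu(H_m)=\nu(H^*)$, and the bijection simultaneously supplies the correspondences asserted in items 1 and 2. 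The main obstacle is the alignment claim, which reduces to the congruence $u_i\equiv 0\pmod d$; everything downstream is a routine expand/collapse argument.
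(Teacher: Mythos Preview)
Your proof is correct and follows essentially the same route as the paper: both arguments use Lemma~\ref{consecutive} to put a maximum matching of $H$ into canonical form, invoke the divisibility by $d$ to see that the top $t$ vertices of each class are unmatched (so the matching lives in $H_m$), and then pass between $H_m$ and $H^*$ by collapsing/expanding blocks of $d$ consecutive vertices. The one place where you are more careful than the paper is the alignment claim---you explicitly verify that $u_i\equiv 0\pmod d$ so that every part in a canonical matching of $H_m$ is a union of whole $d$-blocks---whereas the paper simply asserts that ``replacing each $d$ consecutive vertices of $H_m$ by a single vertex'' yields a corresponding matching in $H^*$; your extra detail is welcome but does not constitute a different approach.
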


\begin{proof}
Let  $M^*$ be a maximum matching in $H^*$, with cardinality $|M^*|$. We ``expand" every vertex in $H^*$ by replacing it with $d$ consecutive vertices. This gives a new $\sigma$-hypergraph $H_m=H(n,r,md \mid \sigma)$, and $M^*$ becomes a matching $M$ in $H_m$ with $|M| = |M^*|$, hence clearly $\nu(H) \geq \nu(H_m) \geq \nu(H^*)$.
 
Now consider $M$, a maximum matching in $H$ with cardinality $|M|$.  By Lemma \ref{not_r_good}, this matching leaves at least $t$ unmatched vertices in each class. By Lemma \ref{consecutive}, there exists another maximum matching $M_c$ in which all vertices in every part of every edge in $M_c$ are consecutive in their respective classes, and in which the unmatched vertices are all consecutive at the top of their respective class.
 
So the top $t$ vertices remain unmatched and hence $M_c$ is also a maximum matching in $H_m$ and now replacing each $d$ consecutive vertices of $H_m$ by a single vertex, we get $H^*=H(n,\frac{r}{d},m \mid \sigma^*)$ with a corresponding matching $M^*$ such that $|M^*| = |M_c| = |M|$.  Hence $\nu(H)=\nu(H_m) \leq \nu(H^*)$.

Therefore $\nu(H)=\nu(H_m)=\nu(H^*)$.
\end{proof}

\subsection{Rectangular Partitions}

We define $\sigma$ to be a rectangular partition if all of its parts are equal.  In this study we shall consider matchings of $\sigma$-hypergraphs where $\sigma$ is rectangular.  In view of Lemma \ref{not_r_good}, we shall start with the rectangular partition all of whose parts are equal to 1.

\begin{lemma} \label{r}
Let  $H=H(n,r,q \mid \sigma)$, where $\sigma=(1,1,\ldots,1)$, and assume $n \geq (r+1)^2$ and $q \geq r$.   Then there is a maximum matching  in which the number of vertices left unmatched is exactly $p$, which is the value of $nq \pmod r $, and hence \[  \nu(H) =  \left \lfloor \frac{nq}{r}  \right \rfloor. \]
\end{lemma}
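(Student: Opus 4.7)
The plan is to extract a perfectly matchable ``bulk'' from $H$ using Lemma~\ref{perfect1} and then handle the residual classes via the Frobenius theorem. Since every edge of $H$ has $r$ vertices and $|V(H)|=nq$, the upper bound $\nu(H)\le\lfloor nq/r\rfloor$ is immediate, so it suffices to build a matching of this size. Write $q=mr+t$ with $0\le t\le r-1$: deleting the top $t$ rows of the grid leaves a copy of $H(n,r,mr\mid\sigma)$ in which $r\mid mr$, so Lemma~\ref{perfect1} provides a perfect matching of those $nmr$ vertices consisting of $nm$ edges. What remains is to match the top $t\times n$ subgrid while leaving exactly $p=nq\bmod r=nt\bmod r$ vertices uncovered. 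Since $\gcd(r,r+1)=1$ and $n\ge(r+1)^2\ge(r-1)r$, Theorem~\ref{Frobenius} lets us write $n=xr+y(r+1)$ with nonnegative integers $x,y$; the slack in the hypothesis (namely $n\ge r^2-1$) actually allows the choice $y=n\bmod r\in\{0,1,\ldots,r-1\}$ with $x=(n-y(r+1))/r\ge0$. Partition the $n$ classes accordingly into $x$ consecutive runs of $r$ classes (type~A) and $y$ consecutive runs of $r+1$ classes (type~B).

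Each type~A run contributes a $t\times r$ subgrid that is matched perfectly by taking each row as one edge, for $xt$ edges in total and no leftover. A type~B run---a $t\times(r+1)$ subgrid with classes labelled $1,\ldots,r+1$ internally and rows $1,\ldots,t$---will be matched by $t$ pairwise vertex-disjoint edges $E_1,\ldots,E_t$ such that $E_i$ uses every class of the run except the $i$th. Explicitly, in a class $j>t$ place $E_i$ at vertex $v_{i,j}$, while in a class $j\le t$ with $j\ne i$ place $E_i$ at $v_{i,j}$ if $i<j$ and at $v_{i-1,j}$ if $i>j$. A short case analysis shows that for any two edges $E_i,E_{i'}$ the rows used inside a shared class always differ (so the edges are pairwise disjoint), that every vertex of the last $r+1-t$ classes of the run is covered, and that the only unmatched vertices in the run are the bottom-row vertices of the first $t$ classes. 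Thus each type~B run contributes $t$ edges and leaves behind $t$ vertices lying in $t$ distinguished classes.

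Across the $y$ type~B runs the leftovers form a set of $yt$ vertices lying in $yt$ pairwise distinct classes of $H$, so they can be greedily grouped $r$ at a time into $\lfloor yt/r\rfloor$ additional edges of $H$ (any $r$ of them form a valid edge since they belong to distinct classes), leaving $yt\bmod r$ vertices uncovered. Substituting $n=xr+y(r+1)$ and $q=mr+t$, the total edge count is $nm+xt+yt+\lfloor yt/r\rfloor=nm+\lfloor nt/r\rfloor=\lfloor nq/r\rfloor$, and the total number of unmatched vertices is $yt\bmod r=nt\bmod r=p$, matching the upper bound. I expect the main technical step to be the type~B construction: verifying pairwise vertex-disjointness of $E_1,\ldots,E_t$ and the consistent parking of leftovers in the first $t$ classes of each run. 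Once that is checked, the Frobenius decomposition and the final combination of leftovers are straightforward arithmetic bookkeeping.
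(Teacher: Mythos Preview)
Your argument is correct, and it is genuinely different from the paper's proof. Both proofs start identically by stripping off a perfectly matchable $mr\times n$ bulk and reducing to a residual $t\times n$ strip, but they diverge in how that strip is handled. The paper first covers $t\times r$ blocks along the strip, leaving a small $t\times g$ corner (with $g=n\bmod r$), and then uses a ``swap'' manoeuvre: it reaches back into the previously built $r\times n$ matchings and, one vertex at a time, exchanges a vertex of an existing edge for an unmatched corner vertex, arguing that the displaced vertices themselves form fresh edges because they lie in distinct columns. The lower bound $n\ge (r+1)^2$ is what guarantees at least $r$ independent $r\times r$ or $r\times(r+1)$ blocks to feed this swap process. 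You instead push the Frobenius decomposition $n=xr+y(r+1)$ (with the specific choice $y=n\bmod r$) down to the $t$-row level: your $t\times(r+1)$ ``type-B'' blocks are engineered so that each leaves exactly $t$ unmatched vertices sitting in $t$ distinct classes, and hence the global pool of $yt$ leftovers already lies in pairwise distinct classes and can be grouped $r$ at a time directly into edges. The upshot is that you never touch the bulk matching again and avoid the swap bookkeeping entirely; incidentally your argument only uses $n\ge r^2-1$, slightly less than the stated $(r+1)^2$. The paper's swap trick, on the other hand, is the prototype for the DLS-based exchange used later in Theorem~\ref{perfect2}, so it earns its keep there.
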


\begin{proof}
Let $nq = mr + p$, where $0 \leq p \leq r-1$.  What is required is to show that we can find a perfect matching for these $mr$ vertices leaving us only with $p$ unmatched vertices as required.

First, consider any $z \times r$ array or block of vertices of $H$.  Construct edges $E_1,E_2, \ldots, E_z$ where each $E_i$ consists of the $i^{th}$ row of the $z \times r$ array.  This gives a matching which covers this array.

Now consider any $r \times (r+1)$ array.  This time we define the edges $E_1,E_2, \ldots, E_r$ as follows: $E_i$ consists of all the $i^{th}$ row in the $r \times (r+1)$ array except for the $i^{th}$ vertex in that row.  The edge $E_{r+1}$ is then made up of the $r$ vertices which have been left out.  Again $E_1,E_2,\ldots,E_{r+1}$ is a perfect matching of the $r \times (r+1)$ array.  Now we combine these two constructions with Theorem \ref{Frobenius}.  If $n \geq r(r-1)$,  there exist non-negative integers $a$ and $b$ such that $n=ar + b(r+1)$.  Therefore, by splitting up an $r \times n$ grid of vertices into $a$ grids of size $r \times r$  and $b$ grids of size $r \times (r+1)$ , we can use the above constructions to cover the whole $r \times n$ grid with a perfect matching. 


Now let us move down the grid of vertices of $H$.  Suppose $q=rk+t$, where $0 \leq t \leq r-1$.  The above construction can be repeated for every one of the $k$ $r \times n$ grids, giving a matching which leaves out the remaining $t \times n$ grid consisting of the bottom $t$ vertices of each class $V_i$  We shall now see how we can cover by a matching many of these vertices.

Starting from the bottom left of the grid of vertices making up $V(H)$, we can cover any $t \times r$ array as we did above in the first construction for a matching of a $z \times r$ grid.  Therefore, if $n=dr+g$, $0 \leq g \leq r-1$, we can cover the bottom $t \times dr$ grid in this way, leaving a $t \times g$ grid unmatched in the bottom right corner of the $q \times n$ grid $V(H)$.  Now, how many of these $tg$ remaining vertices can we cover by a matching?  To see this we need to modify some of the matchings we have constructed so far.

Consider the top grid of size $r \times n$.  In such a grid there are at least $r$ grids of size $r \times r$ or $r \times (r+1)$ since $n \geq (r+1)^2$.   Take the first vertex $v$ of the first edge $E_1$ in the first block and replace it with any vertex $v'$ from the $t \times g$ grid of unmatched vertices, giving us the edge $E_1'=E-v+v'$.  Repeat this by replacing the first vertex in the first edge of the second block with some unmatched vertex in the $t \times g$ array.  Doing this for all the first edges in each of the first $r$ blocks uses $r$ vertices from the $t \times g$ array but creates $r$ unused vertices.  However no two of these vertices are in the same column, therefore they form another new edge.

This procedure can be repeated in various ways, for example:  replacing every second vertex of the same set of edges  with some unused vertex from the $t \times g$ array; or replacing every first vertex of every second edge in a block with an unused vertex from the $t \times g$ array.  We know that  $n \geq ( r+1)^2$.  Hence we have at least $r$ arrays of order $r \times r$ and/or $ r \times (r+1)$ which contain at least $r^2$ edges spread across at least $r$ blocks.  
Replacing  a vertex in the $t \times g$  unmatched grid requires one such edge. We  group the vertices in the $t \times g$ grid into collections of $r$ vertices, and in each such set  we replace the first vertex using an untouched edge from the first block, the second vertex using an untouched edge from the second block and so on.  Since we have less than $(r -1)^2$  vertices in the $t \times g$ grid, and at least $r^2$ edges in the blocks, this process can be carried to the end when less then $r$ vertices remain unmatched.   That is, the number of unmatched vertices is equal to $tg$ reduced $\mod r$.  But $t = q \pmod r$ and $g= n \pmod r$, therefore the number of unused vertices is $nq$ reduced $\mod r$, that is $p$, as required.

\end{proof}


We can now use Lemmas \ref{not_r_good}, \ref{expand} and \ref{r} to tackle general rectangular partitions.

\begin{theorem} \label{rectangular}

Let  $H=H(n,r,q \mid \sigma)$, where $\sigma=(\Delta,\Delta,\ldots,\Delta)$, and let $n \geq (r+1)^2$ and $q \geq r\Delta$. Then \[\nu(H)=\left \lfloor \frac{n(q-q( \bmod \Delta))}{r} \right \rfloor.\]
\end{theorem}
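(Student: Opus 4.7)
The plan is to reduce this theorem to the all-ones case already handled in Lemma \ref{r}, by applying the expansion/contraction correspondence of Lemma \ref{expand}. First I would write $q = m\Delta + t$ with $0 \leq t \leq \Delta - 1$, so that $t = q \bmod \Delta$, and observe that the number of parts of $\sigma$ is $s = r/\Delta$, hence $\gcd(\sigma) = \Delta$.

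If $\Delta = 1$, then $\sigma = (1,1,\ldots,1)$ and the conclusion is immediate from Lemma \ref{r}, since $q \bmod \Delta = 0$ and the claimed value $\lfloor nq/r \rfloor$ coincides with the one provided there. So I may assume $\Delta \geq 2$. In that case, Lemma \ref{expand} applies and yields $\nu(H) = \nu(H^*)$, where $H^* = H(n, r/\Delta, m \mid \sigma^*)$ with $\sigma^* = (1,1,\ldots,1)$ (having $s = r/\Delta$ parts) and $m = \lfloor q/\Delta \rfloor = (q-t)/\Delta$.

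Next I would verify that $H^*$ satisfies the hypotheses of Lemma \ref{r}. Since $s = r/\Delta \leq r$, the assumption $n \geq (r+1)^2$ certainly implies $n \geq (s+1)^2$. Moreover, from $q \geq r\Delta$ we get $m \geq r \geq s$, so the condition $m \geq s$ of Lemma \ref{r} holds. Applying Lemma \ref{r} to $H^*$ (with uniformity parameter $s$ and class-size $m$), we obtain
\[
\nu(H^*) = \left\lfloor \frac{nm}{s} \right\rfloor = \left\lfloor \frac{n(q-t)/\Delta}{r/\Delta} \right\rfloor = \left\lfloor \frac{n(q-t)}{r} \right\rfloor.
\]
Substituting $t = q \bmod \Delta$ and combining with $\nu(H) = \nu(H^*)$ gives the desired formula.

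There is no real obstacle here; the work is already packaged inside Lemmas \ref{expand} and \ref{r}, and the only care needed is to confirm that the hypothesis $n \geq (r+1)^2$ survives the contraction (it does, because the new uniformity $s$ is no larger than $r$) and that the lower bound $q \geq r\Delta$ is exactly what is required to guarantee $m \geq s$ for the contracted hypergraph. The matching upper bound $\nu(H) \leq n(q-t)/r$ coming from Lemma \ref{not_r_good} is implicit in this chain and could also be cited directly as a sanity check on tightness.
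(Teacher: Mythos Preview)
Your proposal is correct and follows essentially the same route as the paper: separate off the case $\Delta=1$, then for $\Delta\geq 2$ invoke Lemma~\ref{expand} to pass to $H^*=H(n,r/\Delta,m\mid(1,\ldots,1))$ and apply Lemma~\ref{r} after checking its hypotheses. The paper carries out the same reduction and then expands the arithmetic $\lfloor nm/s\rfloor=\lfloor n(q-t)/r\rfloor$ a little more explicitly, but the argument is the same.
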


\begin{proof}

Let us consider $\Delta \geq 2$, since the case $\Delta=1$ has already been considered in Lemma \ref{r}.  Let $q=m\Delta+t$, where  $t=q \pmod{\Delta}$ so $0 \leq t \leq \Delta-1$ and $m \geq r$.  By Lemma \ref{not_r_good}, a maximum matching will leave at least $t$ vertices unmatched in each class, that is a total of $nt$ vertices.  Hence let us consider then the $\sigma$-hypergraph $H_m=H(n,r,m\Delta \mid \sigma)$ and also let $H^*=H(n,s,m \mid \sigma^*)$, where $\sigma^*=(1,1,\ldots,1)$ and $s=s(\sigma)=\frac{r}{\Delta}$.  By Lemma \ref{expand}, $\nu(H) = \nu(H_m) = \nu(H^*)$.

Now since $n \geq (r+1)^2$  and $m \geq r > s$, we know, by Lemma \ref{r}, that $H^*$ has a maximum matching $M^*$, with $\nu(H^*)= \lfloor \frac{mn}{s} \rfloor$.  Let $mn=fs + z$, where $0 \leq z \leq s-1$, then there are $z$ unmatched vertices in $H^*$, which correspond to $z\Delta$ unmatched vertices in the corresponding matching $M$, in $H_m$,  by Lemma \ref{expand}, where $z=mn \pmod s = \frac{n(q-t)}{\Delta}\pmod s$, and hence $z\Delta=n(q-t) \pmod s$. Now in $H_m$ there are in total $nm\Delta$ vertices, and $nm\Delta=fs\Delta +z \Delta=fr + z\Delta$, hence $fr$ vertices are matched leaving $z\Delta \leq (s-1)\Delta < r$ vertices unmatched, hence $M$ is a maximum matching in $H_m$ and in $H$ by Lemma \ref{expand}.

So, in $H=H(n,r,q \mid \sigma)$, there are $tn+z\Delta$ vertices unmatched and hence
\[|M|=\nu(H)=\frac{nq-nt-z\Delta}{r}=\frac{nq-nt-[(nq-nt)(\bmod s)]}{r} \]
\[=\frac{\lfloor \frac{nq-nt}{s} \rfloor s}{r} = \frac{\lfloor \frac{nq-nt}{s} \rfloor}{\Delta}=\left \lfloor \frac{n(q-q( \bmod \Delta))}{r} \right \rfloor\]

\end{proof}

\subsection{$r$-good partitions}

In view of  Lemma \ref{not_r_good}, we now turn our attention to $\sigma$-hypergraphs in which $\sigma$ is not rectangular but for which $gcd(\sigma) = 1$, and we try to reduce the number of vertices left unmatched.  We shall see below that for such partitions we can get a maximum matching that leaves a relatively small number, in terms of $r$, of unmatched vertices.

Consider $H=H(n,r,q \mid \sigma)$, where $\sigma=(a_1,\ldots,a_s)$.  We call $\sigma$ an \emph{$r$-good partition} if there exists a subsequence $\pi$ of $\sigma$ such that $\sum_{a_j \in \pi}a_j$ is coprime to $r$.  A necessary condition for a partition $\sigma$ to be $r$-good is that  $gcd(\sigma)=1$.  But this is not a sufficient condition as can be seen for $\sigma=(33,45,55,77)$ and $r=210$. 

Let us now consider some important properties of $r$-good partitions.
 
\begin{lemma} \label{rgood}
Let $\sigma=(a_1,a_2,\ldots,a_s)$ be an $r$-good partition of $r$.  Then there exist disjoint sets $A$ and $B$ such that:
\begin{enumerate}
\item{$A \cup B = \{1,2,\ldots,s\}.$}
\item{If \[a=\sum_{j \in A} a_j \mbox{ and } b=\sum_{j \in B} a_j,\] then $gcd(a,b)=gcd(a,r)=gcd(b,r)=1$. }
\item{Let $L=lcm(a,b)$.  Then $gcd(L,r)=1$, and $L \leq \frac{r^2-1}{4}.$}
\end{enumerate}
\end{lemma}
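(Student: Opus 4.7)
The plan is direct, and the content reduces to elementary number theory once the sets $A$ and $B$ are chosen correctly. I would take $A$ to be the set of indices $j$ such that $a_j$ belongs to the subsequence $\pi$ of $\sigma$ guaranteed by the definition of $r$-goodness, and set $B = \{1,\ldots,s\} \setminus A$. This gives item (1) immediately. For item (2), since $a + b = \sum_{i=1}^{s} a_i = r$, I would apply the identity $gcd(a, r-a) = gcd(a, r)$: starting from the hypothesis $gcd(a, r) = 1$, this gives both $gcd(a, b) = gcd(a, r-a) = 1$ and $gcd(b, r) = gcd(r-a, r) = 1$.

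For item (3), since $gcd(a, b) = 1$ we have $L = lcm(a, b) = ab$, and so $gcd(L, r) = gcd(ab, r) = 1$ because each factor is coprime to $r$. For the numerical bound I would use the identity
\[
ab \;=\; \frac{(a+b)^2 - (a-b)^2}{4} \;=\; \frac{r^2 - (a-b)^2}{4},
\]
reducing the inequality $L \leq (r^2-1)/4$ to showing $|a-b| \geq 1$, that is, $a \neq b$. This in turn follows from $gcd(a,b) = 1$ by a short parity check: if $r$ is odd then $a$ and $b$ have opposite parities, hence are distinct; if $r$ is even then $a$ and $b$ must both be odd (both even would force $gcd(a,b) \geq 2$, while mixed parities would force $r$ odd), so they are distinct odd integers satisfying $|a-b| \geq 2$, which yields an even stronger bound.

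The only genuinely delicate point I expect is ensuring that both $A$ and $B$ are nonempty, so that $a, b \geq 1$ and the formulas are meaningful. If $\pi$ were empty then $\sum_{a_j \in \pi} a_j = 0$ and $gcd(0, r) = r \neq 1$ for $r \geq 2$; if $\pi = \sigma$ then $\sum_{a_j \in \pi} a_j = r$ and $gcd(r, r) = r \neq 1$. Thus under the $r$-good hypothesis (with $r \geq 2$, which is implicit in the hypergraph setting since $\sigma$ has at least two parts whenever $r$-goodness is nontrivial) the selected $\pi$ is automatically a proper nonempty subsequence, both $A$ and $B$ are nonempty, and the argument above carries through.
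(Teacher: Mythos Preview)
Your proof is correct and follows essentially the same approach as the paper: the same choice of $A$ and $B$ from the subsequence $\pi$, the same use of $a+b=r$ to propagate coprimality, and the same case split on the parity of $r$ for the bound on $L$. Your presentation is in fact slightly more careful---you make explicit that $L=ab$ (not merely $L\le ab$) via $\gcd(a,b)=1$, you use the clean identity $ab=(r^2-(a-b)^2)/4$ where the paper argues by maximizing the product, and you address nonemptiness of $A$ and $B$, a point the paper leaves implicit.
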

\begin{proof}
Clearly, if we set $A=\{i: a_i \in \pi\}$ and $B=\{i: a_i \not \in \pi\}$, then $A \cup B = \{1,2,\ldots,s\}$, $A$ and $B$ are disjoint, and $a=\sum_{j \in A} a_j$ and $b=\sum_{j \in B} a_j$  are such that $gcd(a,b)=gcd(a,r)=gcd(b,r)=1$. If $L$ is the lowest common multiple of $a$ and $b$ then $gcd(L,r)=1$ by simple number theory.

 Now $L \leq ab$, where $a+b=r$.  The product $ab$ is maximum if $a=b=\frac{r}{2}$ and $a+b$ is even.  But since $a$ and $r$ are coprime and $b$ and $r$ are also coprime, $\min\{a,b\} \leq \frac{r}{2}-1$ and $\max\{a,b\} \geq \frac{r}{2}+1$ and hence $L \leq \frac{r^2-4}{4}$.

If on the otherhand, $r$ is odd,  then the maximum product $ab$ is attained when $\min\{a,b\}=\frac{r-1}{2}$ and $\max\{a,b\}=\frac{r+1}{2}$, giving $L \leq \frac{r^2-1}{4}$.
\end{proof}

Let us now consider a general situation where an $r$-good partition can be used to give a perfect matching in $H(n,r,q \mid \sigma)$.

\begin{lemma} \label {packing}
Consider $H=H(n,r,q \mid \sigma)$ where $\sigma$ is an $r$-good partition with sets $A$ and $B$ and numeric values $a$, $b$ and $L$ as described in Lemma \ref{rgood}.  Then if $r|n$ and $L|q$, $H$ has a perfect matching, that is $\nu(H)=\frac{nq}{r}$.
\end{lemma}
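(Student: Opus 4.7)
The plan is to exploit the divisibility conditions $r\mid n$ and $L\mid q$ to tile $V(H)$ with disjoint subgrids of size $r\times L$ (that is, $r$ consecutive classes and $L$ consecutive rows), and then build a perfect matching in each subgrid independently. The crucial preliminary observation, arising from $\gcd(a,b)=1$ in Lemma \ref{rgood}, is that $L=\mathrm{lcm}(a,b)=ab$. Writing $n=rn_1$ and $q=Lq_1$, I would partition the $q\times n$ vertex grid into $n_1q_1$ such blocks and reduce to the single-block case.

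Within one block, I split the $r$ classes into a ``left part'' $\mathcal{A}$ consisting of $a$ classes and a ``right part'' $\mathcal{B}$ consisting of the remaining $b$ classes (recall $a+b=r$). On $\mathcal{A}$ I form the auxiliary $\sigma_A$-hypergraph $H_A=H(a,a,ab\mid\sigma_A)$, where $\sigma_A=(a_j)_{j\in A}$ is the subpartition of $\sigma$ with sum $a$. Since $a\mid ab$ and the side conditions $a\geq|A|$ and $ab\geq a$ of Lemma \ref{perfect1} hold trivially, $H_A$ admits a perfect matching $M_A$ of $ab$ ``$A$-cells''. The mirror construction on $\mathcal{B}$, using $\sigma_B=(a_j)_{j\in B}$, produces a perfect matching $M_B$ of $ab$ ``$B$-cells''.

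To combine these into edges of $H$, I pair the cells of $M_A$ with those of $M_B$ via any bijection; each paired union has $a+b=r$ vertices whose nonzero part-sizes form exactly the multiset $\sigma_A\cup\sigma_B=\sigma$, and uses each class at most once (since $\mathcal{A}$ and $\mathcal{B}$ are disjoint and each cell already has one part per class). Hence each pair is a legitimate edge of $H$, yielding $L$ edges that cover the block. Taking the union across all $n_1q_1$ blocks produces a perfect matching of $H$ of size $nq/r$, as desired.

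The only genuinely creative step is the observation that $L=ab$ makes the vertical dimension exactly right for the two ``sides'' to be matched independently by Lemma \ref{perfect1}; after this reduction, every divisibility and dimensional verification is immediate, so I do not expect any serious obstacle. If subtlety arises at all, it will be in checking that the combined edges are legal elements of $E(H)$—specifically that no class is used twice—but this is guaranteed structurally by the left/right decomposition.
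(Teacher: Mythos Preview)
Your argument is correct and follows essentially the same route as the paper: both decompose $V(H)$ into $L\times r$ blocks, split each block columnwise into an $a$-side and a $b$-side, invoke Lemma~\ref{perfect1} on each side to obtain $L$ copies of $\sigma_A$ and $L$ copies of $\sigma_B$, and then pair them into genuine $\sigma$-edges. The only cosmetic difference is that you use $\gcd(a,b)=1\Rightarrow L=ab$ to apply Lemma~\ref{perfect1} once to each side, whereas the paper further subdivides the $L\times a$ strip into $L/a$ squares of size $a\times a$ (and similarly on the $b$-side) before invoking the lemma; the resulting matchings are the same.
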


\begin{proof}
 Let $\sigma_a= \{a_j: j \in A\}$ and $\sigma_b= \{a_j: j \in B\}$.  Consider $r$ classes $V_1,V_2,\ldots,V_r$ and take $L$ vertices from each of these classes to form an $L \times r$ grid of vertices.  Let us divide this into two grids of sizes $L \times a$ and $L \times b$.  Consider the $L \times a$ grid:  this can be divided into $\frac{L}{a}$ square grids of size $a \times a$, and we can pack, by Lemma \ref{perfect1}, $a$ sets of $\sigma_a$ into each of these $a \times a$ grids, and a total of $L$ copies of $\sigma_a$. Similarly, the $L \times b$ grid can be divided into $\frac{L}{b}$ square grids of size $b \times b$, and we can pack, by Lemma \ref{perfect1}, $b$ sets of $\sigma_b$ into each of these $b \times b$ grids, and a total of $L$ copies of $\sigma_b$.  Hence each set of vertices for $\sigma_a$ can be matched with a set of vertices of $\sigma_b$ giving an edge of $H$.  Hence we have a perfect matching in an $L \times r$ grid.  If $r|n$ and $L|q$, then we can divide the vertices into a number of $L \times r$ grids of vertices and pack each one as described, giving a perfect matching.

\end{proof}

We now state and prove our main theorem.  We use two classic results in this proof.  Firstly  we use Theorem \ref{Frobenius}, and secondly we use the concept of a Diagonal Latin Square (DLS). A DLS of order $i$ is an $i \times i$ array containing every integer from 0 to $i -1$ in every row, every column and on the leading diagonal.  It is known that there exists a DLS of order $i \times i$, $\forall i \geq 3$.  Using this result we can define a \emph{DLS matching} as follows:

Let $R$ be an $r \times s$ subarray of $V(H)$ for the $\sigma$-hypergraph $H=H(n,r,q \mid \sigma)$ where $\sigma=(a_1,a_2,\ldots,a_s)$.  Let $D$ be a DLS whose entries are $a_1^*,a_2^*,\ldots,a_s^*$.  Corresponding to $D$ we can define a perfect matching of $R$ as follows.  Partition the entries of the $i^{th}$ column of $R$ into parts of size $a_1,a_2,\ldots,a_s$ such that these parts occur in the same order as the corresponding symbols $a_1^*,a_2^*,\ldots,a_s^*$ appear in the $i^{th}$ column of $D$.  Now, taking the parts $a_1,a_2,\ldots,a_s$, one from each column of $R$, gives an edge of $H$ , and these edges together form a perfect matching in $R$ such that each edge corresponds to a row of $D$, and hence the parts corresponding to the $a_i^*$ which run down the main diagonal of $D$ are in different edges.  These parts will be referred to as the parts in the \emph{main diagonal of the DLS matching}.  We call this perfect matching a \emph{DLS matching}.  Figure 1 gives an example of a $9 \times 3$ DLS matching when $\sigma=(4,3,2)$.

\begin{figure}[h!]
\centering
\includegraphics{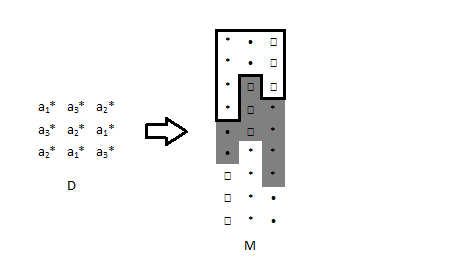}  
\caption{The corresponding $10 \times  3$  DLS matching M for $\sigma=(4,3,2)$, where $*$ represents vertices in part $a_1$ of size 4, $\Box$ represents vertices in part $a_2$ of size 3, and \textbullet $\mbox{  }$ represents vertices in part $a_3$ of size 2}
\end{figure}

If $T$ is an $r \times st$ subgrid of $V(H)$,  then we can divide $T$ into $t$ grids of size $r \times s$ and we can construct a DLS matching for each of these subgrids.  We also call this a DLS matching of $T$.

\begin{theorem} \label{perfect2}
Consider $H=H(n,r,q|\sigma)$ where $\sigma$ is an $r$-good partition.  Let $a,b$ and $L$ be as described in Lemma \ref{rgood}.  Then
\begin{enumerate}
\item{If $r|q$ and $n \geq s$,  or $r|n$ and $q \geq (L-1)(r-1)$, then $H$ has a perfect matching, that is $\nu( H)  = \frac{qn}{r}$}
\item{If $q \geq L(r-1)$  and $n \geq s$, then there is a matching in $H$ that leaves at most   $\L(r-1)^2$  vertices unmatched, that is $\nu(H) \geq \frac{qn-L(r-1)^2}{r}$.} 
\item{If $q \geq L(r^2-1)$, $s(\sigma) \geq 3$ and $n \geq s+r$,   then there is a matching in $H$ that leaves at most   $(r-1)^2$ vertices unmatched, that is $\nu(H) \geq \frac{qn -(r-1)^2}{r}$.}
 \end{enumerate}
\end{theorem}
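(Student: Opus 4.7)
The plan is to handle all three parts by combining Theorem \ref{Frobenius} with Lemmas \ref{perfect1} and \ref{packing}, and for Part 3 additionally with the DLS matching machinery just introduced. For Part 1, the sub-case $r \mid q$ is immediate from Lemma \ref{perfect1}, using $n \geq s$ and $q \geq r$ (the latter holding because $r \mid q$ and $q>0$). For the sub-case $r \mid n$ and $q \geq (L-1)(r-1)$, since $\gcd(L,r) = 1$ by Lemma \ref{rgood}, Theorem \ref{Frobenius} gives non-negative integers $x, y$ with $q = xL + yr$. Splitting the $q \times n$ grid horizontally into a top strip of height $xL$ and a bottom strip of height $yr$, Lemma \ref{packing} perfectly matches the top strip (using $r \mid n$ and $L \mid xL$) while Lemma \ref{perfect1} perfectly matches each of the $y$ sub-strips of height $r$ in the bottom.

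For Part 2, I plan to apply Frobenius more carefully: since $\gcd(L,r) = 1$, the unique $x \in \{0, 1, \ldots, r-1\}$ with $xL \equiv q \pmod r$ satisfies $xL \leq L(r-1)$, and setting $yr := q - xL$ the hypothesis $q \geq L(r-1)$ forces $y \geq 0$. Writing $n = \alpha r + \beta$ with $0 \leq \beta \leq r-1$, Lemma \ref{perfect1} perfectly matches the bottom $yr \times n$ strip and Lemma \ref{packing} perfectly matches the top $xL \times \alpha r$ block. The only unmatched region is the top-right $xL \times \beta$ corner, with at most $L(r-1)(r-1) = L(r-1)^2$ vertices, as required. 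The degenerate case $\alpha = 0$ forces $n = \beta \leq r - 1$, in which case leaving the entire top block unmatched still gives at most $xLn \leq L(r-1)^2$ vertices.

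For Part 3, the goal is to eliminate the factor $L$ from the bound in Part 2. I would again write $q = xL + yr$ with $0 \leq x \leq r-1$; the stronger hypothesis $q \geq L(r^2-1)$ now yields $yr \geq L(r-1)r$, making the bottom strip (perfectly matched by Lemma \ref{perfect1}) comfortably tall. The work is in the top $xL$ rows, where the plan is to reorganise the matching so that the unmatched corner has dimensions at most $(r-1) \times (r-1)$ rather than $L(r-1) \times (r-1)$. The hypothesis $n \geq s + r$ lets me set aside $r$ auxiliary classes next to the $\beta$ residual ones and use $r \times s$ DLS matchings (valid because $s \geq 3$) together with $r \times r$ blocks from Lemma \ref{perfect1} to absorb the residual classes across most of the top strip, while $L \times r$ packing blocks from Lemma \ref{packing} cover the remainder via a Frobenius-style vertical decomposition of $xL$ as a non-negative combination of heights $L$ and $r$.

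The hard part will be the Part 3 bookkeeping. In Part 2 the corner has height $L(r-1)$ precisely because $L$-row packing blocks do not cleanly stack with $r$-row DLS/Lemma-\ref{perfect1} blocks when $xL$ is not a multiple of $r$. Replacing some packing blocks near the right edge of the top strip with $r$-row blocks requires a simultaneous Frobenius-type decomposition along both axes, and the constants $q \geq L(r^2-1)$ and $n \geq s + r$ appear to be tuned exactly to make this patchwork feasible. A sub-case analysis according to whether $\alpha \geq 2$ or $\alpha = 1$ (the latter forcing $\beta \geq s$ and letting the residual block host its own edges) is likely to be needed.
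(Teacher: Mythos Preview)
Your Parts 1 and 2 are correct and essentially match the paper's argument; your Part 2 is even a little tidier, since you pick the Frobenius representative $x\in\{0,\ldots,r-1\}$ directly rather than stripping off $r\times n$ layers until between $(L-1)(r-1)$ and $L(r-1)$ rows remain, but the ingredients and the resulting bound are the same.

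Part 3 has a genuine gap. The point of the DLS matching is not merely that it perfectly matches an $r\times s$ block (Lemma~\ref{perfect1} already does that); it is that the $s$ parts on the main diagonal lie in $s$ different edges and in $s$ different columns, and together constitute a valid edge of $H$. The paper exploits this as a \emph{swap}: take $r$ consecutive vertices from an unmatched column outside the DLS block, partition them as $a_1,\ldots,a_s$, and replace the diagonal parts of the DLS block by these new parts. Each of the $s$ edges of the DLS matching remains a valid edge (only one of its parts moved to a new column), and the displaced diagonal parts themselves form one additional valid edge. Thus each DLS block absorbs $r$ previously unmatched vertices. The paper places these DLS blocks inside the many $r\times n$ strips of the already-matched lower portion (there are $g=(q-q_1)/r$ such strips, each containing $f\geq 1$ DLS blocks since $n\geq s+r$), and the hypothesis $q\geq L(r^2-1)$ is exactly what guarantees $fg\geq q_1$, i.e.\ enough DLS blocks to absorb all but the last $(q_1\bmod r)\times (n\bmod r)\leq (r-1)^2$ vertices of the unmatched corner.

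Your sketch does not contain this swap idea. ``Absorb the residual classes'' is left unexplained, and the suggested ``Frobenius-style vertical decomposition of $xL$ as a non-negative combination of heights $L$ and $r$'' does not help: $xL$ is already $x\cdot L+0\cdot r$, and stacking $r$-row blocks within the top strip alone still leaves a residue of height $xL\bmod r$ across the $r+\beta$ set-aside columns, which can be as large as $(r-1)(2r-1)$, not $(r-1)^2$. The reduction to $(r-1)^2$ genuinely requires moving matched material between the top strip and the tall bottom strip, and the DLS diagonal swap is the mechanism that does it.
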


\begin{proof}

\noindent 1. \indent If $r|q$ and $n \geq s$, then by Lemma \ref{perfect1}, $H$ has a perfect matching.

If $r|n$ and $q \geq m = (L-1)(r-1)$, then clearly $gcd(a,b)=gcd(a,r)=gcd(b,r)= gcd(L,r)=1$ by elementary number theory.  Then by Theorem \ref{Frobenius}, if $q\geq (L-1)(r-1)$,  there exist non-negative integers $x$ and $y$ such that $q=xL+yr$.  Hence we can separate the $nq$ vertices into two grids: one which is $xL \times n$, and one which is $yr \times n$.  The former grid has a perfect matching by Lemma \ref{packing}, while the latter has a perfect matching by Lemma \ref{perfect1}.  Hence $H$ has a perfect matching if $q \geq m=(L-1)(r-1)$
\bigskip

\noindent 2. \indent Suppose now that $q \geq m+r-1$ and $n \geq s$.  Consider the $q \times n$ grid of vertices.  For each $r \times n$ grid there is a perfect matching by Lemma \ref{perfect1}.  We take as many such $r \times n$ grids as possible, as long as the left over grid is at least $m \times n = (L-1)(r-1) \times n$.  So, when this process of packing $r \times n$ grids is stopped we are left with a $q_{\scriptscriptstyle1} \times n$ grid of unmatched vertices, for some integer $q_{\scriptscriptstyle1}$ such that $(L-1)(r-1) \leq q_{\scriptscriptstyle1} \leq m +(r-1)=(L-1)(r-1)+(r-1)=L(r-1)$ (otherwise we can pack one more strip). Now we use the fact  that $q_{\scriptscriptstyle1} \geq m$ and hence it  is in the range  where the  $q_{\scriptscriptstyle1} \times n$ subgrid has a perfect matching if $r |n$ . Let $n = tr +b$  such that $0 \leq b \leq  r-1$.  Then  the $q_{\scriptscriptstyle1} \times tr$ grid has a perfect matching, leaving $bq_{\scriptscriptstyle1}$ vertices unmatched.  But $b \leq r-1$ and $q_{\scriptscriptstyle1} \leq L(r-1)$, hence $bq_{\scriptscriptstyle1} \leq L(r-1)^2$.
\bigskip

\noindent 3. \indent Suppose now  that $q \geq L(r^2-1)=L(r-1)(r+1)=(m+r-1)(r+1) \geq m+r-1=(L-1)(r-1)+(r-1)=L(r-1)$ and $n \geq s+r$.  Let $f$ be the largest integer such that $n = fs + h$, $h \geq  r$.   Clearly $f  \geq 1$.  For each $r \times n$ subgrid of $V(H)$  there is a perfect matching by Lemma \ref{perfect1}.  We construct this matching such that it forms a DLS matching for the $r \times fs$ subgrid made up of $f$ grids of size $r \times s$.   The remaining $r \times h$ part, where $r \leq h \leq (r+s-1)$ is given any perfect matching which is possible by Lemma \ref{perfect1}. 

Starting with the first $r$ rows of $V(H)$, we take as many such $r \times n$ grids as long as the left over grid is at least $m \times n = (L-1)(r-1) \times n$.  Let the remaining grid be $q_{\scriptscriptstyle1} \times n$ --- then $ m = (L-1)(r-1) \leq q_{\scriptscriptstyle1} \leq m  +(r-1)  = (L- 1)(r-1) +( r-1) = L(r-1)$ (otherwise we can take one more $r \times n$ grid).   Let $n = tr +b$  such that $0 \leq b \leq  r-1$.  Since $q_{\scriptscriptstyle1} \geq m$, then by part 1 of this theorem, the $q_{\scriptscriptstyle1} \times tr$ grid has a perfect matching, leaving $q_{\scriptscriptstyle1} \times b$ vertices unmatched, where $b \leq r-1$ and $q_{\scriptscriptstyle1} \leq L(r-1)$.   Hence $bq_{\scriptscriptstyle1} \leq L(r-1)^2$.  Now let $q_{\scriptscriptstyle1} =  pr + z$, where $0 \leq z \leq r-1$.  Clearly if $p  = 0$ then $bq_{\scriptscriptstyle1} \leq (r-1)^2$ as stated.  So let us assume $p \geq 1$.  We now show how to match more vertices from this remaining grid.  We first observe  that since $b \leq r-1$,  we have at least $\frac{( n- r +1)}{s} =  f $  DLS matchings of size $r \times s$ whose columns are distinct from those of the unmatched grid.  We take $r$ vertices in a column in the unmatched grid and partition them into parts of size $a_1, a_2, \ldots ,a_s$. We take a DLS and replace a part consisting of $a_1$ vertices from the main diagonal of the matching by the corresponding part from the $r$ vertices in the unmatched columns, and so on.  Every original edge remains a valid one under this exchange.  But now $a_1, \ldots,a_s$ in the original DLS matching are not used.  But by the structure of the DLS matchings they form a valid edge.  Hence every DLS matching  reduces  the number of unmatched vertices by $r$, and if we have sufficiently many DLS matchings, we can match  $pr \times b$ vertices leaving exactly $z \times b$ vertices unmatched. We need exactly $pb$ DLS matchings for this.  Now $pb =  b \frac{(q_{\scriptscriptstyle1} -z)}{r}  < q_{\scriptscriptstyle1}   \leq L(r-1)$ .  So with $f =\frac{n- h}{s}$  and  $g = \frac{q - q_{\scriptscriptstyle1}}{r}$  we  need  $fg \geq  q_{\scriptscriptstyle1}$.  In particular since $q  \geq L(r-1)^2  = (r+1) L(r-1) \geq (r+1)q{\scriptscriptstyle1}$ and $n  \geq r+s$,  we get $f \geq 1$, $g \geq \frac{rq_{\scriptscriptstyle1}}{r} = q_{\scriptscriptstyle1}$ and $fg \geq q_{\scriptscriptstyle1}$, which is the number of DLS matchings required.  Hence we can conclude that   there will be at most $(r-1)^2$ vertices left unmatched, and hence $\nu(H) \geq \frac{qn -(r-1)^2}{r}$.

NOTE: For $s(\sigma)=2$ and $\sigma$ an $r$-good partition, we can work in a very similar way without the use of a DLS matching. Very breifly we proceed as follows. Let $M$ be an $r\times 2$ grid containing two edges in the matching, the first edge made up of the top $a_1$ vertices in the first columns of $M$ and the top $a_2$ vertices in the second column of $M$. The other vertices of $M$ form the second edge of the matching. This $M$ is the analogue of the previous DLS matching. Now let $C$ be an unmatched column of $r$ vertices which, in $V(H)$, is not in any of the two columns containing $M$. The matching can be augmented to include the vertices of $C$: form an edge using the top $a_1$ vertices in the first column of $M$ together with the top $a_2$ vertices in $C$, and another edge using the top $a_2$ vertices of the second column of $M$ together with the bottom $a_1$ vertices of $C$. Thus $r$ unmatched vertices can be matched in this way, and if $n = r+2$ and $q \geq L(r^2-1)$, we get exactly the same resultas for $s(\sigma) \geq 3$.
\end{proof}

\section{Conclusion}

In our earlier work \cite{CaroLauri14, CLZ2,CLZ1} we used $\sigma$-hypergraphs in order to throw more light on colourings of mixed hypergraphs \cite{voloshin02} and, more generally, constrained colourings of hypergraphs. There we found that these hypergraphs were a very flexible tool for investigating parameters such as the upper and lower chromatic numbers and phenomena such as gaps in the chromatic spectrum of constrained colourings. In this paper we have started to investigate the versatility of $\sigma$-hypergraphs in the study of two other classical areas of hypergraph theory, independence and matchings. 

Our first motivation for this work was a very pleasing link between constrained colourings and independence numbers which holds for general uniform hypergraphs. We believe that the relationship between the independence number $\alpha_\beta(H)$ and $\alpha(H)$,  and the upper and lower chromatic numbers $\overline{\chi}_{\alpha,\beta}$ and $\chi_{\alpha,\beta}$ given in Proposition \ref{linkcolind} shows a facet of constrained colourings which has not been investigated before. In Theorem \ref{alpha_k} we see the advantage of the extra structure afforded by $\sigma$-hypergraphs. This structure enabled us to obtain a complete formula for the $k$-independence number of $\sigma$-hypergraphs whose computation is independent of the size of the hypergraph but depends only on the structure of $\sigma$ and, in fact, for fixed $r$, can be computed in $O(1)$ time.

When it comes to the consideration of matchings in a $\sigma$-hypergraph $H$, our results generally depend on elementary number-theoretic relations between the parameters of $H$ and between the parts of $\sigma$. Under some simple conditions on the parameters of $H$ it is easy to show that it has a perfect matching and, under less restrictive conditions but assuming all the parts of $\sigma$ are equal, we were able to compute the exact number of vertices which are left out of a maximum matching and hence the size of such a matching. For more general $\sigma$ we showed that, if the greatest common factor of its parts is at least 2, then most often a perfect matching is not possible. When this greatest common factor is 1, we were able to determine a good approximation for the number of vertices left out of a maximum matching provided the sum of some parts of $\sigma$ is coprime with $r$, the size of the edges of $H$.

As we have seen, the maximum $k$-independence problem for $\sigma$-hypergraphs can be computed exactly, while on the other hand, for the maximum matching problem for $\sigma$-hypergraphs we  have complete solution in some cases, and in other cases, we have given a tight approximation for the number of vertices left unmatched.  It seems that improving upon our tight approximations and maybe even getting exact solutions  is  a worthy problem to consider.

In colourings of mixed hypergraphs, some of the strongest and most general results were obtained when the underlying hypergraph had the very regular combinatorial structure of a design, as in \cite{colburn99,Gionfriddo04,gionfriddo04bicolouring}. The existence of such hypergraphs is, however, usually very restricted. On the other hand, $\sigma$-hypergraphs have a much less restrictive structure. In \cite{CaroLauri14, CLZ2,CLZ1} we showed that, nevertheless, this structure is rich enough to yield interesting general results on colourings of mixed hypergraphs and $(\alpha,\beta)$-constrained colourings. In this paper we hope to have shown that, even in the classical areas of independence and matchings, $\sigma$-hypergraphs provide a sufficiently rich structure upon which to build meaningful general results.      

\bibliographystyle{plain}
\bibliography{CLZ3}

\end{document}